\newlength{\hatchspread}
\newlength{\hatchthickness}
\newlength{\hatchshift}
\newcommand{\hatchcolor}{}
\tikzset{hatchspread/.code={\setlength{\hatchspread}{#1}},
         hatchthickness/.code={\setlength{\hatchthickness}{#1}},
         hatchshift/.code={\setlength{\hatchshift}{#1}},% must be >= 0
         hatchcolor/.code={\renewcommand{\hatchcolor}{#1}}}
\tikzset{hatchspread=3pt,
         hatchthickness=0.4pt,
         hatchshift=1pt,% must be >= 0
         hatchcolor=black}
\numberwithin{equation}{section}
\newtheorem{theorem}{Theorem}[section]
\newtheorem{definition}[theorem]{Definition}
\newtheorem{lemma}[theorem]{Lemma}
\newtheorem{example}[theorem]{Example}
\newtheorem{proposition}[theorem]{Proposition}
\newtheorem{corollary}[theorem]{Corollary}
\newtheorem{remark}{Remark}[section]
\def\bbr{{\mathbb R}}
\def\bx{{\bar{x}}}
\def\by{\bar{y}}
\begin{document}
%\fontsize{11pt}{16pt}\selectfont
\title{\textbf{Global stability in the Ricker model with delay and stocking}}
\author{Ziyad AlSharawi\thanks{Corresponding author: zsharawi@aus.edu} $\;$ and $\;$ Sadok Kallel \\[2pt]
Department of Mathematics and Statistics\\
 American University of Sharjah, P. O. Box 26666\\
  University City, Sharjah, UAE\\ }
\date{\today}
\maketitle

\begin{abstract}
We consider the Ricker model with delay and constant or periodic stocking.  We found that the high stocking density tends to neutralize the delay effect on stability. Conditions are established on the parameters to ensure the global stability of the equilibrium solution in the case of constant stocking, as well as the global stability of the $2$-periodic solution in the case of $2$-periodic stocking.  Our approach extensively relies on the utilization of the embedding technique. Whether constant stocking or periodic stocking, the model has the potential to undergo a Neimark-Sacker bifurcation in both cases. However, the Neimark-Sacker bifurcation in the $2$-periodic case results in the emergence of two invariant curves that collectively function as a single attractor. Finally, we pose open questions in the form of conjectures about global stability for certain choices of the parameters.
\end{abstract}
\noindent {\bf AMS Subject Classification}: 39A10, 39A30, 92D25\\
\noindent {\bf Keywords}: Ricker model, global stability, embedding, periodic solutions, stocking.
\section{ Introduction}
As an early attempt to model a population of a single species with non-overlapping generations, Moran (1950) \cite{Mo1950} proposed using a one-hump map that increases to a maximum and then decreases asymptotically to a non-negative value. Along this line of thought and based on experimental data, Ricker introduced a density-dependent model that became known in the scientific literature as the Ricker model \cite{Ri1954}
\begin{equation}\label{Eq-Ricker1}
x_{n+1}=x_nf(x_n)=x_ne^{r\left(1-\frac{x_n}{K}\right)}, \quad K, r>0 \ \hbox{and}\ x_0\in \bbr_+=[0,+\infty).
\end{equation}
In this model, the parameters $K$ and $r$ represent the carrying capacity and the intrinsic growth rate, respectively. To learn more about the origin and significance of Ricker's model, and Ricker's pivotal contribution in the field of quantitative fishery science, we refer the reader to \cite{Sc2006}.  Rescaling can reduce the model to $y_{n+1}=y_n\exp(r-y_n).$  This model enjoyed a boost of success after the publication of the two studies by May \cite{Ma1974, Ma1976}, and a third study by May and Oster \cite{Ma-Os1976}, in which the model's intriguing dynamics were investigated, and the global stability of its positive equilibrium for $0<r<2$ was observed. A rigorous proof of the global stability for $0<r\leq 2$ was developed later using various techniques such as Lyapunov functions \cite{Fi-Go-Vi1979}, Singer's theorem \cite{Si1978} and enveloping \cite{Cu2005}.
\\

While implicit time lags are embedded in discrete systems, explicit time lags caused by significant recruitment delays must be accounted for in the density-dependent function $f.$ Time delays are necessary to accommodate the recruitment to the adult stage, which may vary based on the type of species  \cite{Pe1974,Le-Ma1976,Br-Ch2012}. This mechanism requires that a mathematical model be considered with a certain time lag $N.$ As a result, Eq. \eqref{Eq-Ricker1} becomes
\begin{equation}\label{Eq-Ricker2}
y_{n+1}=y_nf(y_{n-N})=y_ne^{r-y_{n-N}}.
\end{equation}
Due to implicit delays, solutions of discrete models are observed to overshoot and undershoot their equilibrium level and generate oscillations, periodic solutions, or chaos. This tendency is typically aggravated when explicit delays are incorporated into the system \cite{Pe1974,Le-Ma1976,Ko2001}. When a population exhibits convergence to its equilibrium regardless of its initial density, the phenomenon is called global stability. Proving mathematically that the equilibrium solution is globally stable becomes challenging when time delays are involved. Under general settings of the density-dependent map $f,$ Liz \emph{et al.} \cite{Li-Tk-Vi-Tr2006} found sufficient conditions to obtain global stability for Eq. \eqref{Eq-Ricker2} when $0<r<\frac{3}{2(N+1)}.$ This implies $0<r<\frac{3}{4}$ when $N=1.$  In a lengthy and technical paper that included computer-assisted proofs, Bartha \emph{et al.} \cite{Ba-Ga-Kr2013} established global stability when $N=1$ and $0<r<1.$
\\

Stocking or harvesting can be used as a management tool to achieve various goals, including chaos reversal and re-stabilization of an equilibrium level \cite{Se1998,Se-Ro1998}. According to \cite{Al-Am2016}, constant rate harvesting is observed to transform contest competition into scramble competition. On the other hand, constant stocking slows the fluctuation and has a stabilizing effect on equilibrium solutions \cite{Al2013}.  When constant stocking is applied to the Ricker model with delay,  Eq. (\ref{Eq-Ricker2}) becomes
\begin{equation}\label{Eq-Ricker3}
y_{n+1}=y_nf(y_{n-1})+h=y_ne^{r-y_{n-1}}+h=F(y_n,y_{n-1}),\quad h>0.
\end{equation}
The parameter $h$ can also be regarded as a perturbation parameter, as investigated in  \cite{Br-Ki2006}, but without considering any time delay. Our primary motivation for considering Eq. (\ref{Eq-Ricker3}) stems from two significant factors, namely the effect of delay and stocking on stability. The individual impact of delay and stocking was considered in research \cite{Pe1974,Le-Ma1976,Al2013,Ko2001}, and it is known to reflect contrasting impacts on stability. However, exploring the interaction between delay and stocking and their impact on overall stability is a novel task. 
\\

Equations of the form $y_{n+1}=y_nf(y_{n-1})+h$ have been considered in \cite{Al-Am2016} under the assumption that $tf(t)$ is increasing; however, Eq. (\ref{Eq-Ricker3}) does not belong to this category.  If the stocking is done through seasonal quotas, the constant $h$ is replaced by a sequence $\{h_n\}$, which we assume to be $p$-periodic.
 Therefore, Eq. (\ref{Eq-Ricker3}) becomes
\begin{equation}\label{Eq-Ricker4}
y_{n+1}=y_nf(y_{n-1})+h_n=y_ne^{r-y_{n-1}}+h_n=F_n(y_n,y_{n-1}),
\end{equation}
where $h_n\geq 0$ is a $p$-periodic sequence and $F$ is defined on the positive orthant $\bbr^2_+$. We continue to write $f(y)$ instead of $e^{r-y}$ whenever we find it convenient. Unlike Eq. \eqref{Eq-Ricker1}, which has two equilibrium points $x=0$ and $x=r$, Eq. (\ref{Eq-Ricker3}) has a unique non-negative equilibrium solution, which bifurcates into a $p$-periodic solution when periodicity is introduced in Eq. (\ref{Eq-Ricker4}).
\\

The structure of our paper is focused on the development of the theory and apparatus required to investigate the global stability of the equilibrium solution in Eq. (\ref{Eq-Ricker3}) and the $p$-periodic solution in   Eq. (\ref{Eq-Ricker4}). We provide a clear geometric understanding of the reason why stability (local or global) holds. The sections of this paper are structured as follows: In Section Two, preliminary findings regarding the utilization of the embedding technique in autonomous and periodic cases are presented.
 In the third section, we leverage the outcomes from Section Two to apply them to Equation (\ref{Eq-Ricker3}). This section delineates sufficient conditions on the variable $h$ that ensure global stability in the autonomous case. We determine ``almost completely" the local and global stability regions in the $(h,r)$ plane, modulo a well-identified restricted region of uncertainty. In Section Four, we turn our attention to Equation (\ref{Eq-Ricker4}) and employ the embedding technique to investigate the $2$-periodic case, ultimately establishing the global stability of the $2$-periodic solution under certain sufficient conditions.
 This paper closes with a conclusion section that not only summarizes our key findings but also raises pertinent open questions for further exploration.
%------------------------------------------

\section{The embedding technique }
Embedding a dynamical system in general into a higher dimensional dynamical system that can be utilized to classify certain characteristics of the original system is a known approach \cite{Go-Ha1994,Sm2006,Sm2008}. However, the challenge arises when it comes to the technical details of the approach, particularly with the partial order in the embedded system.  In this section, we build the basic machinery needed in the sequel.  Let $V$ denote a partially ordered metric space, specifically the positive orthant, as defined for the purposes of this discussion, $\bbr_+^n$ or $[a,b]^n$ for some $n$.
A recursive sequence in $V$, with delay $k$, is any sequence defined by
\begin{equation}\label{recursive}
\alpha_F : x_{n+1} = F(x_{n},\ldots, x_{n-k+1}), \ \ k\geq -1,
\end{equation}
where $F:V^{k}\rightarrow V$ is a continuous function and the initial terms $x_0,x_{-1},\ldots, x_{-k+1}$ are given in $V$. These initial terms, together with $F$, determine the sequence uniquely.
We write $\mathcal S(V)$, the set of all recursive sequences in $V$. This can be topologized as a subspace of
$V^w$, the infinite direct product of $V$. Notice that different functions $F$ can give rise to the same recursive sequence \eqref{recursive}, so only the sequence uniquely determines the system.

\begin{definition} A continuous injection $\Psi: \mathcal S(V_1)\hookrightarrow\mathcal S (V_2)$, which sends convergent
sequences to convergent sequences, is called an ``embedding''. If $V_1$ is a subspace of $V_2$, then
there is a canonical inclusion $\mathcal S (V_1)\subset\mathcal S (V_2)$.
\end{definition}
It is pertinent to note that, generally, the convergence of $\Psi (\alpha)$ does not entail the convergence of $\alpha$ for a given embedding $\Psi: \mathcal S(V_1)\rightarrow \mathcal S(V_2)$.

\subsection{Embedding in the autonomous case}\label{autonomous}
Consider the two-dimensional difference equation
$\alpha_F: x_{n+1}=F(x_n,x_{n-1})$, where $F:\bbr^2_+\rightarrow\bbr$ is any continuous function.  We define $T(x,y) = (F(x,y),x)$, so that
$T(x_n,x_{n-1})=(x_{n+1},x_n)$ and its iterations produce a sequence in the plane
$$(X_n) : (x_0,x_1), (x_1,x_0),\ldots, X_n:= (x_n,x_{n-1}),$$
which entirely describes
the dynamics of the system.  We refer to  $T$ as the ``vector form'' of the system $\alpha_F$.
The system $T$ has a global attractor, which means that the sequence $(X_n)$ converges independently of the choice of the initial value within the given domain.
The assignment  $(x_n)\mapsto (X_n)$
gives an embedding $\Psi: \mathcal S(\bbr)\hookrightarrow
\mathcal S(\bbr^2)$. This is a \textit{strong} embedding in the sense that $\Psi(\alpha)$ is convergent if and only if
$\alpha$ is convergent.
\\

Next, we write $V=\bbr^2_+$. For every sequence $\alpha_F\in\mathcal S(V )$,  we define a $4$-dimensional sequence $\zeta_F\in \mathcal S(V^4)$ with general
term $\zeta_n = (X_n,X_n)= (x_n,x_{n-1},x_{n},x_{n-1})$, and initial term $\zeta_0 = (x_0,x_{-1},x_0,x_{-1})$. This is the image of $(X_n)$ under the diagonal embedding
$V^2\rightarrow V^2\times V^2$, $(x,y)\mapsto (x,y,x,y)$.
This is a strong embedding as well $\mathcal S(V)\hookrightarrow \mathcal S(V^4)$, which we refer to as the ``diagonal embedding''. Because this is a strong embedding, a good way to establish convergence of $\zeta_F$ will help us obtain convergence in our initial system.

Let us write $\zeta_F$ in recursive form as follows. Define the self-map of $V^4$ as
\begin{equation}\label{mapg}
G(x,y,u,v) = (F(x,y),u, F(u,v), x).
\end{equation}
Starting with $\zeta_0$, we see that
$\zeta_n = G^n(\zeta_0)$.
The advantage of introducing the map $G$ is to obtain a form of monotonicity \cite{Sm2008,Sm2006}.
Introduce the ``southeast partial ordering" on $V\times V$ by $(x_1,y_1)\leq_{se} (x_2,y_2)$ if and only if $x_1\leq x_2$ and $y_1\geq y_2$.  Using the southeast partial ordering on $V^2\times V^2$ means using $\leq_{se}$ on $W\times W$ with $W=V^2$.
A key observation now is that if $F$ is non-decreasing in its first component and non-increasing in its second one, i.e., $F(\uparrow, \downarrow)$, then the sequence $G$ is monotonic with respect to the $\leq_{se}$ partial ordering on $V^2\times V^2$. More precisely, rewrite  $G:V^4\rightarrow V^4$ as a map
$G: V^2\times V^2\rightarrow V^2\times V^2$, $(X,U)\longmapsto G(X,U)$. Then
$$(X_1,U_1)\leq_{se} (X_2,U_2)\ \Longrightarrow
G(X_1,U_1)\leq_{se}G(X_2,U_2).$$
This monotonicity gives a handy way of studying the convergence of the diagonal sequence $\zeta_F:\  \zeta_n=G^n(\zeta_0)$, and thus ultimately that of $\alpha_F$.  We develop this approach next.

\begin{definition}\label{box} Write $A=(a,b)$ and $B=(b,a)$. We say that $X=(x,y)$ is inside a box with vertices $A$ and $B$ in $V^2$ if $a\leq b$ and $A\leq_{se} X\leq_{se} B$. Equivalently, if in $V^4$ we have
$$(A,B)\leq_{se} (X,X)\leq_{se}(B,A)$$
\end{definition}

Suppose that for a choice of the map $F$ and the points $A$ and $B$, the map $G$ in Eq. \eqref{mapg} has the property that
$(A,B)\leq_{se}G(A,B)$ and $G(B,A)\leq_{se} (B,A)$. This happens if $(a,b)\leq_{se} (F(a,b),F(b,a))$. Then, if $X_0$ (resp. some $X_k=T^k(X_0)$ for some $k\geq 0$)
is inside a box with vertices $A,B$,  we can see immediately that for positive $n$,
\begin{equation}\label{Eq-InequalitySadokRequest}
(A,B)\leq_{se} G^n(A,B)\leq_{se} G^n(X_0,X_0)\leq_{se} G^n(B,A)\leq_{se}(B,A),
\end{equation}
which means that our sequence $\zeta_F$ is eventually caught between the orbits of $G$ through $(A,B)$ and $(B,A)$. The sequences $G^n(A,B)$ and $G^n(B,A)$, being bounded and monotonic in the southeast ordering, converge to fixed points of $G$.
A fixed point of $G$ must be of the form
$$\bar x=(x,y, y, x),\; \text{where}\; x = F(x,y)\;\text{and}\; y=F(y,x).$$
Such fixed points come in pairs if $x\neq y$, and so if  $(F(x,y), F(y,x)) = (x,y)$ has a unique solution,
then $x=y=\bar x$ is a fixed point of $F$. If $x\neq y,$ then $(x,y)$ and $(y,x)$  are dubbed as \textit{pseudo fixed points} of $F$ (these were called ``artificial fixed points'' in \cite{Al2022}). The above discussion leads us to the following result \cite{Sm2006,Sm2008}.

\begin{proposition}\label{Prop-GlobalStability} Let $F: V^2\rightarrow V$, where $F(\uparrow,\downarrow)$ and consider the system $x_{n+1}=F(x_n,x_{n-1})$ with an initial condition $X_0= (x_0,x_{-1})\in V^2$. Suppose there is $(a,b)\in V^2$ such that $(a,b)\leq_{se}(F(a,b),F(b,a))$ and $(a,b)\leq_{se} (x_k,x_{k-1})\leq_{se}(b,a)$ for some $k\geq 0$. If $(F(x,y), F(y,x)) = (x,y)$ has a unique solution in $V^2$, then
$\zeta_F$ converges to a global attractor in $V^4$ and $\alpha_F$ converges to a global attractor in $V^2.$ If $(F(x,y), F(y,x)) = (x,y)$ has three solutions $(x^*,y^*), (\bar y,\bar y)$ and $(y^*,x^*)$, $x^*<y^*$, then
$$x^*\leq \lim\inf \alpha_F\leq\lim\sup\alpha_F\leq y^*.$$
\end{proposition}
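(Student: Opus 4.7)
The plan is to exploit the diagonal-embedding machinery of Section~\ref{autonomous} by tracking the two boundary orbits of the monotone map $G$. Writing $A=(a,b)$ and $B=(b,a)$ in $V^2$, the first step is to verify that the hypothesis $(a,b)\leq_{se}(F(a,b),F(b,a))$ is exactly what is needed so that $(A,B)\leq_{se} G(A,B)$ and $G(B,A)\leq_{se}(B,A)$ in $V^2\times V^2$; this follows directly from the explicit formulas $G(a,b,b,a)=(F(a,b),b,F(b,a),a)$ and $G(b,a,a,b)=(F(b,a),a,F(a,b),b)$. Since $F(\uparrow,\downarrow)$ makes $G$ monotone in the $\leq_{se}$ order, the orbits $n\mapsto G^n(A,B)$ and $n\mapsto G^n(B,A)$ are non-decreasing and non-increasing respectively, and both stay trapped in the order interval with endpoints $(A,B)$ and $(B,A)$. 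Coordinatewise these orbits are monotone and bounded in $\bbr_+$, so each converges in $V^4$, and continuity of $G$ forces the limits $L_-$ and $L_+$ to be fixed points of $G$.

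Next I would identify the fixed points of $G$: from $G(x,y,u,v)=(x,y,u,v)$ one reads off $u=y$, $v=x$, $F(x,y)=x$ and $F(y,x)=y$, so the fixed points of $G$ are precisely the tuples $(x,y,y,x)$ in which $(x,y)$ solves the system $(F(x,y),F(y,x))=(x,y)$. Applying the monotonicity of $G$ to the box inclusion $(A,B)\leq_{se}(X_k,X_k)\leq_{se}(B,A)$, which is the four-dimensional translation of the hypothesis on $(x_k,x_{k-1})$, yields the sandwich
\begin{equation*}
G^n(A,B)\leq_{se}\zeta_{n+k}\leq_{se}G^n(B,A) \qquad \text{for every } n\geq 0.
\end{equation*}
In the unique-solution case, the symmetry $(x,y)\leftrightarrow (y,x)$ of the defining system forces the sole solution to satisfy $x=y=\bx$, so $L_-=L_+=(\bx,\bx,\bx,\bx)$. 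The sandwich then squeezes $\zeta_{n+k}$ to this fixed point, and reading off the first coordinate gives $x_n\to\bx$, as required.

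For the three-solution case, the fixed points of $G$ are exactly $P_-=(x^*,y^*,y^*,x^*)$, $P_0=(\by,\by,\by,\by)$ and $P_+=(y^*,x^*,x^*,y^*)$, so each of $L_\pm$ belongs to $\{P_-,P_0,P_+\}$. Whichever of these three the limits turn out to be, the first coordinate of such a fixed point lies in $\{x^*,\by,y^*\}\subseteq[x^*,y^*]$, so extracting the first coordinate from the sandwich immediately yields $x^*\leq\liminf x_n\leq\limsup x_n\leq y^*$. The only step requiring genuine attention is the passage to the limit of the monotone boundary orbits in $V^4$; this reduces to coordinatewise monotone convergence of real sequences combined with continuity of $G$, both of which are already in place, and crucially the argument never needs to identify which of $P_-,P_0,P_+$ the limits $L_\pm$ actually equal.
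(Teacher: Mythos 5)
Your proposal is correct and follows essentially the same route the paper takes: the paper does not give a separate formal proof of this proposition but derives it from the immediately preceding discussion, namely the monotonicity of $G$ in the $\leq_{se}$ order, the sandwich inequality \eqref{Eq-InequalitySadokRequest} applied from index $k$ onward, the monotone convergence of the boundary orbits $G^n(A,B)$ and $G^n(B,A)$ to fixed points of $G$, and the observation that fixed points of $G$ have the form $(x,y,y,x)$ with $(F(x,y),F(y,x))=(x,y)$ so that uniqueness forces $x=y$. Your write-up fills in exactly these steps (including the coordinatewise reading of $\leq_{se}$ in the three-solution case, where you implicitly use $x^*\leq\bar y\leq y^*$ just as the paper does), so there is nothing to add.
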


A  convenient way of stating this result is as follows: Recall that
$X_n=T^n(x_0,x_{-1})= (x_n,x_{n-1})$. Suppose $F(\uparrow, \downarrow)$ and $(a,b)\leq_{se}(F(a,b),F(b,a))$. Suppose that the sequence
$(X_n)$ is eventually in the box with vertices $A=(a,b)$ and $B=(b,a)$. If $G: V^4\rightarrow V^4$ has a unique fixed point, then it must be of the form $(x,x,x,x)$ and $\alpha_F$ must converge to $x$. Alternatively, the system $T$ has the global attractor $(x,x)$ in that box.

A $q$-cycle $C_q:=\{\xi_0,\xi_1,\ldots,\xi_{q-1}\}$ of the map $G$ in Eq. (\ref{mapg}) is a periodic solution of the system $\xi_{n+1}=G(\xi_n)$ with minimal period $q.$  Note that $C_q$ could be driven by a $q$-cycle $\{x_0,x_1,\ldots,x_{q-1}\}$  of $F,$ where $$\xi_0=(x_0,x_{q-1},x_0,x_{q-1})\quad \text{and}\quad \xi_j=(x_j,x_{j-1},x_j,x_{j-1}),\; j=1,\ldots, q-1.$$ Otherwise, we say that $F$ has a pseudo $q$-cycle. This notion has been introduced and classified in \cite{Al2022}.
%%%%%%%%%%%%%%%%%%%%%%%%%%%%%%%%%%%%%%%%%%%%

\subsection{Periodic systems}

We consider in this section dynamical systems of the form
\begin{equation}\label{sequence2}
\alpha : x_{n+1} = F_n(x_n,x_{n-1})\ ,\ (x_0,x_{-1})\in V^2,
\end{equation}
where we assume $F_{n+p}=F_n$, $\forall n\geq 0$, and $p\geq 1$ is the minimal positive integer with such a property. We refer to such a sequence as being ``$p$-periodic''. These sequences are treated in \cite{Al2022}, and we give a general overview next.
We stress that the starting time $n=n_0$ is crucial in non-autonomous systems because it dictates certain synchronization between time and space. In this paper, we consider $n_0=0$ throughout.   The sequence $\alpha := \{(x_n)\}_{n\geq 0}$ in $V$ gives rise to the sequence $(X_n):= \{(x_n,x_{n-1})\}_{n\geq 0}$ in $V^2$ as before. To stress the role of the individual maps in the system (\ref{sequence2}), we denote it by $[F_0,F_1,\ldots,F_{p-1}]$ and its vector form by $[T_0,T_1,\ldots,T_{p-1}],$ where
$T_n(x_n,x_{n-1}):= (F_n(x,y), x)\ ,\ n\geq 0.$
Considering first the period two case, let us write $T_{01}=T_0\circ T_1$ and $T_{10}=T_1\circ T_0$ the compositions whose associated sequences in $V^2$ are given by
$$(X_{2n+1}): (x_{2n+1},x_{2n}) = T_{01}(x_{2n-1},x_{2n-2})\ \ \hbox{and}\ \ (X_{2n}): (x_{2n},x_{2n-1}) = T_{10}(x_{2n-2},x_{2n-3})$$ with initial values  $(x_0,x_{-1})$ and $(x_1,x_0)$ respectively.
These two sequences are interlocked as in Figure \ref{Fig-Diagram}.

\begin{figure}[htb]
\centering
\begin{tikzpicture}
\node (A) at (3,0) {$(x_1,x_0)$};
\node (B) at (9,0) {$(x_3,x_2)$};
\node (C) at (0,-1.5) {$(x_0,x_{-1})$};
\node (D) at (6,-1.5) {$(x_2,x_1)$};
\node (E) at (10,-1.5) {};
\draw[->] (A) -- node[above]{$T_{01}$} (B);
\draw[->] (C) -- node[below]{$T_{10}$} (D);
\draw[->] (D) -- node[below]{$T_{10}$} (E);
\draw[dashed,->] (C) -- node[left]{$T_0$} (A);
\draw[dashed,->] (A) -- node[left]{$T_1$} (D);
\draw[dashed,->] (D) -- node[right]{$T_0$} (B);
\end{tikzpicture}
\caption{}\label{Fig-Diagram}
    %\caption{The horizontal composites $T_{10}=T_1\circ T_0$ and $T_{01}=T_0\circ T_1 : V^2\rightarrow V^2$ give rise to sequences $\beta_{10}$ and $\beta_{01}$. These sequences, together, give rise to a recursive sequence in $V^4$.}
\end{figure}

Decomposing the $2$-periodic recursive sequence $(X_n)$ into these two subsequences give an embedding
$\mathcal S(V)\rightarrow\mathcal S(V^2)\times\mathcal S(V^2)$ whose image is
$(X_{2n+1},X_{2n})$. Concatenating these sequences side-by-side produces a recursive sequence $\zeta = \Psi (\alpha)$ in $\mathcal S(V^4)$ given by the general term
$$\zeta_n:= (x_{2n+1}, x_{2n}, x_{2n},x_{2n-1})\; \text{where}\; \zeta_0 = (x_1,x_0, x_0,x_{-1}).$$

The above is not a strong embedding in general since a convergence of a pair of subsequences implies convergence of the original sequence unless they both converge to the same limit. In all cases, this construction extends in an obvious manner to $p$-periodic recursive sequences of finite delay and gives an embedding of the subset of recursive $p$-periodic sequences of delay $N$ into $\mathcal S(V^{N})$.

Starting with Eq. \eqref{sequence2}, $p=2$, the system in vector form is written $[T_0,T_1]$, and it breaks into the two ``folded'' systems $T_{10}$ and $T_{01}$ as already indicated.
We will relate the $k$-cycles of $T_{10}$ and $T_{01}$ to the $2k$-cycles of $[T_0,T_1]$. Note that this process is known as folding and unfolding, and it has been characterized for one-dimensional maps \cite{Al-Ca-Li2014a}.

\begin{lemma}\label{2periodic}
 Define $\alpha : x_{n+1} = F_n(x_n,x_{n-1}), (x_0,x_{-1})\in V^2$ which is $2$-periodic.
 Let $C_k(T_{01})\subset (V^2)^k$ (resp. $C_k(T_{10})$) describe the set of $k$-cycles of $T_{01}$ (resp. $T_{10})$. Then $T_0$ maps $C_k(T_{01})$ bijectively onto $C_k(T_{10})$, with inverse $T_1$. If these sets are disjointed, their union is a $2k$-cycle of $[T_0,T_1]$.  In particular, $[T_0,T_1]$ has a unique $2$-cycle if and only if $T_{01}$ and $T_{10}$ have unique fixed points which are distinct.\end{lemma}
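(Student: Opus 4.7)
The whole argument is driven by the semi-conjugation identities
$$T_1\circ T_{01}=T_{10}\circ T_1,\qquad T_0\circ T_{10}=T_{01}\circ T_0,$$
which follow immediately from associativity together with $T_{01}=T_0\circ T_1$ and $T_{10}=T_1\circ T_0$; they say that $T_1$ and $T_0$ intertwine the two folded maps in opposite directions. First I would check that each of $T_0,T_1$ sends cycles to cycles: given a $k$-cycle $(P_0,\ldots,P_{k-1})$ of $T_{01}$ with $T_{01}(P_i)=P_{i+1\bmod k}$, the first identity gives $T_{10}(T_1(P_i))=T_1(P_{i+1})$, so the tuple $(T_1(P_0),\ldots,T_1(P_{k-1}))$ is cyclically permuted by $T_{10}$; the symmetric statement starting from a $T_{10}$-cycle follows from the second identity. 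Injectivity on the orbit is the cancellation $T_0\circ T_1=T_{01}$ in reverse: if $T_1(P_i)=T_1(P_j)$ then $P_{i+1}=P_{j+1}$, so $i\equiv j\pmod k$. Preservation of the minimal period is an iteration of the same trick: a shorter image-side period $d\mid k$ with $d<k$ would give, via iteration of the intertwining, $T_1(T_{01}^d(P_i))=T_1(P_i)$, and injectivity then forces $T_{01}^d(P_i)=P_i$, contradicting minimality of $k$. Finally, the identities $T_0\circ T_1=T_{01}$ and $T_1\circ T_0=T_{10}$ themselves act as cyclic shifts on each orbit, so $T_0$ and $T_1$ are mutually inverse bijections at the level of $k$-cycles.

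With this bijection in place, the $2k$-cycle statement is essentially a counting argument. Take an orbit $\mathcal O$ of $T_{01}$ and its $T_1$-image, which is an orbit of $T_{10}$; assume they are disjoint as subsets of $V^2$. Running the alternating system $[T_0,T_1]$ from any starting point in the $T_{10}$-orbit, successive iterates alternate strictly between the two disjoint $k$-element sets, producing a closed orbit of $[T_0,T_1]$ of length exactly $2k$, since disjointness blocks any earlier return.

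The ``in particular'' case is $k=1$. A $2$-cycle $\{A,B\}$ of $[T_0,T_1]$ with $B=T_0(A)$, $A=T_1(B)$ and $A\neq B$ forces $A\in\mathrm{Fix}(T_{10})$ and $B\in\mathrm{Fix}(T_{01})$, and conversely the disjointness construction above produces a genuine $2$-cycle from any such pair. Under the bijection, the two fixed-point counts on either side coincide, so uniqueness of the $2$-cycle of $[T_0,T_1]$ is equivalent to uniqueness of the fixed points of $T_{10}$ and $T_{01}$, with the distinctness condition $A\neq B$ ruling out the degenerate case of a common fixed point of $T_0$ and $T_1$ (which would produce a $1$-cycle of $[T_0,T_1]$, not a $2$-cycle). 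The only step that truly needs care is the minimality-of-period argument in step one; everything else is algebraic manipulation of compositions and set-theoretic bookkeeping.
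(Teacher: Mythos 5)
Your proof is correct and follows exactly the route the paper takes: the paper's entire proof consists of citing the two intertwining identities $T_1\circ T_{01}=T_{10}\circ T_1$ and $T_0\circ T_{10}=T_{01}\circ T_0$ and, for the final claim, the observation that a common fixed point of $T_0$ and $T_1$ yields a $1$-cycle rather than a $2$-cycle, so you have simply written out the details (cycle preservation, injectivity on orbits, minimality of the period, and the disjointness/alternation count) that the paper leaves implicit. One small remark: the identities actually send $C_k(T_{01})$ to $C_k(T_{10})$ via $T_1$ and back via $T_0$ --- the direction you correctly use --- which is opposite to the way the lemma's statement is phrased (compare Example \ref{first}, where $T_0(1,y)=(y,1)$ carries the $T_{10}$-cycle to the $T_{01}$-cycle).
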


\begin{proof}
   This is an immediate consequence of the identities $T_1\circ T_{01} = T_{10}\circ T_1$, and $T_0\circ T_{10} = T_{01}\circ T_0$.
The final claim is a consequence of the fact that the fixed points of $\alpha$ correspond to its one cycle.
\end{proof}

 In the next two illustrative examples, we ignore monotonicity, and focus on the structure of cycles.

\begin{example}\label{first}\rm Let $p=k=2$.
Here's an example where the $2$-cycles for $T_{01}$ and $T_{10}$ are different.
Define $\alpha$ as in Eq. \eqref{sequence2} with
$F_0(x,y)= y+x^2-1$ and $F_1(x,y)= -y$. Then
$$T_0(x,y) = (y+x^2-1, x)\; \text{and}\; \ T_1(x,y) = (-y,x).$$
We can check that $\{(1,y), (-1,y)\}$ is the unique $2$-cycle of $T_{10}=T_1T_0$, while $\{(y,1), (y,-1)\}$ is the unique $2$-cycle of $T_{01}=T_0T_1$. Clearly,
$T_0(1,y)=(y,1)$ and $T_1(y,1)=(-1,y)$. The system $[T_0,T_1]$ has a $4$-cycle.
\end{example}

\begin{example}\label{second}\rm  Define $\alpha$ as in \eqref{sequence2} with
$F_0(x,y) = xy$ and $F_1(x,y)={x\over y}$. This is $2$-periodic, with subsequences in vector form induced from $T_0(x,y) = (xy,x)$ and $T_1(x,y) = ({x\over y},x)$. A  calculation shows that both systems  $T_{10}$ and $T_{01}$ have each a unique three cycle that is  common to both
$$\{(-1,-1), (1,-1), (-1,1)\} =  C_3(T_{10})=  C_3(T_{01}).$$
This is also the same $3$-cycle for each individual map $T_j$ and for the $2$-periodic system $[T_0,T_1].$ This is due to the fact that the periodicity of the system $2$ and the periodicity of the cycle $3$ are coprime numbers.
\end{example}

Similarly, treating the $p$-periodic case for any $p\geq 2$ is possible. In that case, the sequence $\alpha$ in \eqref{sequence2} gives rise to $p$ sequences in $V^2$, indexed by the action of the cyclic group $\mathbb Z_p$ with generator $\sigma$ acting on
the ordered tuple $(p-1,p-2,\ldots, 1,0)$  by cyclic permutations of coordinates. Here $\sigma^i(p-1,p-2,\ldots, 1,0) = (i-1,i-2\ldots, 0,\ldots, i)$.
To this corresponds the sequence
$X_{\sigma^i}$ with general recursive term
\begin{equation}\label{vecsys}
(x_{n+1},x_{n}) = T_{\sigma^i}(x_n,x_{n-1})\ \ \hbox{where}\ \ T_{\sigma^i}=T_{i-1} T_{i-2}\cdots T_0\cdots T_i,
\end{equation}
$T_i(x,y)=(F_i(x,y), x)$, with initial term $T_{i-1}\cdots T_0(x_0,x_{-1})$.
The sequences $X_{\sigma^i}$ partition $(X_n)$. They are mirror copies of each other in the sense that either all converge simultaneously or diverge. If some sequence $X_{\sigma^i}$ has a $k$-cycle, then all other sequences have $k$-cycles. If some sequence has a global attractor, so do all other sequences (see \cite{Al-Ca-Li2014a}, Lemma 2.1).
It is of potential interest to understand how $k$-cycles of the $T_{\sigma^i}$s' produce $q$-cycles of $[T_0,T_1,\ldots, T_{p-1}]$. The $k$-cycles in $C_k(T_{\sigma^i})$, for different $i$, may overlap or even be equal as illustrated in Example \ref{first} and Example \ref{second}. Note for example that if the $F_i$'s are injective, then a fixed point of $T_{\sigma^i}$ gives rise to a $p$-cycle of $[T_0,T_1,\ldots, T_{p-1}]$. This notion is a straightforward generalization of the one-dimensional case in \cite{Al-Ca-Li2014a}.

\subsection{Embedding in the periodic case} \label{SubSection-2.3}
In the autonomous case (Subsection \ref{autonomous}),
we extended a one dimensional system $\alpha_F$ as given in Eq. \eqref{recursive} into a
a four dimensional recursive system $\zeta_F : \zeta_{n+1} = G(\zeta_n)$, where $G$ is a self map of $V^4$. When working in a box (see Definition \ref{box}),  if $G$ had a unique fixed point, then $\alpha_F$ converged to a unique fixed point. We seek an analogous result in the periodic case, i.e., Eq. \eqref{sequence2}. It turns out in this case that a unique fixed point of some corresponding embedded system in $V^4$ leads to the existence of a globally attracting cycle of $[T_0,T_1]$ (and $\alpha_F$). We will restrict ourselves below to the $2$-periodic case, i.e., $p=2$.

Starting with \eqref{sequence2}, $F_0 , F_1$, $F_1\neq F_0$ and $F_i(\uparrow,\downarrow)$, define similarly
$$G_0(x,y,u,v) = (F_0(x,y),u, F_0(u,v), x)\ \ ,\ \ G_1(x,y,u,v) = (F_1(x,y),u, F_1(u,v), x).$$
We write $G_{01} = G_0\circ G_1$ and $G_{10}= G_1\circ G_0$.  The four dimensional embedded system $[G_0,G_1]$ is defined by  $\zeta_{n+1} = G_{n\mod 2}(\zeta_n)$. Again, the restriction of $[G_0,G_1]$ to the diagonal subset in $V^2\times V^2$ is the ``diagonal system''
$([T_0,T_1],[T_0,T_1])$. The following is a direct check.

\begin{lemma} \label{onetoone}
Assume that $F_i(\uparrow,\downarrow)$ are \textit{strictly} monotonic in each component. Then both $G_0$ and $G_1$ are one-to-one.
\end{lemma}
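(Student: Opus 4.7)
The plan is to show injectivity of $G_i$ directly by reading off the coordinates of a preimage, exploiting the fact that two of the four output slots of $G_i(x,y,u,v)$ are literally $x$ and $u$, and the remaining two use $F_i$ with these same $x$ and $u$ frozen in one argument.

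First, I would suppose $G_i(x_1,y_1,u_1,v_1) = G_i(x_2,y_2,u_2,v_2)$ and compare componentwise. The fourth coordinate of $G_i(x,y,u,v)$ is $x$, so $x_1=x_2$; the second coordinate is $u$, so $u_1=u_2$. That already recovers half of the input from the output with no use of $F_i$'s monotonicity.

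Next, using the first coordinate equation $F_i(x_1,y_1) = F_i(x_2,y_2)$, substitute $x_1=x_2$ to obtain $F_i(x_1,y_1) = F_i(x_1,y_2)$. Since $F_i$ is \emph{strictly} monotonic (decreasing) in its second variable by hypothesis $F_i(\uparrow,\downarrow)$, this forces $y_1=y_2$. The same argument applied to the third coordinate $F_i(u_1,v_1) = F_i(u_2,v_2)$ with $u_1=u_2$ yields $v_1=v_2$. Hence the two inputs coincide, proving $G_i$ is one-to-one.

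The main point to highlight is that strict monotonicity in just the second argument is what carries the argument—the first argument's strict monotonicity is not actually needed here, because $x$ and $u$ are recoverable from the trivial coordinates of $G_i$. There is no real obstacle; the entire content of the lemma is packaged into the observation that $G_i$ was deliberately designed so that its second and fourth output coordinates are the identity projections, leaving only cancellation in one argument of $F_i$ to perform.
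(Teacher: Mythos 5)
Your proof is correct and is precisely the ``direct check'' the paper alludes to: the second and fourth output coordinates of $G_i$ return $u$ and $x$ verbatim, and strict monotonicity of $F_i$ in its second argument then recovers $y$ and $v$ from the first and third coordinates. Your added observation that only strict monotonicity in the second argument is actually used is accurate and a nice refinement of the stated hypothesis.
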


A fixed point of the system $[G_0,G_1]$ must be a fixed point of all the $G_i$'s. In other words, if $\bar\zeta\in V^4$ is a fixed point of $[G_0,G_1]$, then
\begin{equation}\label{formfixed}\bar{\zeta} = (\bx,\by,\by,\bx), \; \text{where}\;
F_j(\bx, \by) = \bx\; \text{and}\; F_j(\by,\bx) = \by, j=0,1.
\end{equation}
Note that a fixed point of
$[G_0,G_1]$ is necessarily a fixed point of both $G_{01}$ and $G_{10}$. The converse is not always true. It could happen that $G_{01}$ and $G_{01}$ have different fixed points that form a $2$ cycle of $[G_0,G_1]$. The relationship between the cycles of $[F_0,F_1]$ and $[G_0,G_1]$ turns out to be interesting. It has been characterized in \cite{Al2022}, but we illustrate it here for clarity and completeness.  A $2$-cycle $\{x_0,x_1\}$ of $[F_0,F_1]$ must satisfy
 \begin{equation}\label{2cycles}
F_0(x_1,x_0)=x_0\ \hbox{and}\ F_1(x_0,x_1)=x_1,\quad x_1\neq x_0.
\end{equation}
A $2$-cycle of $[G_0,G_1]$ is of the form $\{\zeta_0,\zeta_1\}\in V^4\times V^4$ such that
$G_0(\zeta_0)=\zeta_1$ and $G_1(\zeta_1)=\zeta_0$, $\zeta_0\neq\zeta_1$.
Let $\zeta = (\bar x,\bar y,\bar u,\bar v)$ be a fixed point of $G_{10}$, that
is  $G_1(G_0(\zeta)) = \zeta$.
Then we must have
\begin{equation}\label{condition}
(\bar u,\bar v) = (F_1(\bar y,\bar x), F_0(\bar x,\bar y))\ \ \hbox{and}\ \ (\bar x,\bar y)=
(F_1(\bar v,\bar u), F_0(\bar u,\bar v)).
\end{equation}
Note that if $\zeta = (\bar x,\bar y,\bar u,\bar v)$ is a fixed point of $G_{10}$, then
$\zeta' = (\bar u, \bar v, \bar x, \bar y)$ is also a fixed point. A unique fixed point of
$G_{10}$ is therefore of the form $(\bar x,\bar x,\bar x,\bar x)$ or
$(\bar x,\bar y,\bar x,\bar y)$, $\bar x\neq \bar y$ and satisfying the conditions \eqref{condition}.
We discuss next the various implications of the existence of the fixed points of $G_{10}$ on
the cycles of $[G_0,G_1]$ and $[F_0,F_1].$  Assume $\xi_0 = (\bar x,\bar y,\bar u,\bar v)$ satisfies the system of equations in \eqref{condition}.
\begin{description}
\item{(i)} Let $\bar x=\bar y.$ We have the following scenarios:
\begin{itemize}
\item $\bar \xi_0=(\bar x,\bar x,\bar x,\bar x)$ and $\bar x$ is a common fixed point for each individual map $F_j.$ Obviously, $\bar x$ will be a fixed point of $[F_0,F_1].$
\item  $\bar \xi_0=(\bar x,\bar x,\bar u,\bar u), $ and $G_0(\bar \xi_0)=\bar \xi_1=(\bar u,\bar u,\bar x,\bar x), $ where $\bar x\neq \bar u.$ In this case, $\{\bar \xi_0,\bar \xi_1\}$ is a $2$-cycle of $[G_0,G_1]$ and $\{\bar x,\bar u\}$ is a $2$-cycle of the one dimensional $2$-periodic system $[f_0,f_1],$ where $f_0(t)=F_0(t,t)$ and $f_1(t)=F_1(t,t).$ $\{(\bar x,\bar x),(\bar u,\bar u)\}$ is dubbed as a pseudo $2$-cycle of $[F_0,F_1].$
\end{itemize}
\item{(ii)} Let $\bar x\neq \bar y.$ We have the following scenarios:
\begin{itemize}
\item $\bar\xi_0=(\bar x,\bar y,\bar y,\bar x),$ where $\bar\xi_0$ is a common fixed point of each individual map $G_j.$ In this case, $(\bar x,\bar y)$ and $(\bar y,\bar x)$ are dubbed as two pseudo common fixed points of each map $F_j.$
\item $\bar \xi_0=(\bar x,\bar y,\bar x,\bar y) $ and $G_0(\bar \xi_0)=\bar \xi_1=(\bar y,\bar x,\bar y,\bar x). $  In this case, $\{\bar \xi_0,\bar \xi_1\}$ is a $2$-cycle of $[G_0,G_1]$ and $\{\bar y,\bar x\}$ is a $2$-cycle of $[F_0,F_1].$
\item $\bar \xi_0=(\bar x,\bar y,\bar u,\bar v)$ and $ \bar \xi_1=(\bar v,\bar u,\bar y,\bar x),$ where $(\bar u,\bar v)$ is neither $(\bar x,\bar y)$ nor $(\bar y,\bar x).$  In this case, $(\bar u,\bar v)=(F_1(\bar y,\bar x),F_0(\bar x,\bar y))$ and $C_2:=\{\bar \xi_0,\bar \xi_1\}$ is a $2$-cycle of $[G_1,G_0],$ while $\{(\bar x,\bar y),(\bar v,\bar u)\}$, $\{(\bar u,\bar v),(\bar y,\bar x)\}$ are dubbed as pseudo $2$-cycles of $[F_0,F_1].$
\end{itemize}
\end{description}

Next, we need to focus on stability. Given a unique fixed point of $G_{01}$, and provided we are working ``in a box'', we deduce that
$T_{01}$ also has a unique fixed point. Similarly for $T_{10}$. By Lemma \ref{2periodic}, $[T_0,T_1]$ has a $2$-cycle if the fixed point is not common.
This must be a globally attracting cycle \cite{Al2022}, and we extract the needed result below.

\begin{proposition}\label{Pr-GSConditions}
Let $F_j: \mathbb{R}_+^2\rightarrow \mathbb{R}_+$, where $F_j(\uparrow,\downarrow)$ for each $j=0,1.$ Consider the system $[F_0,F_1]$ with initial conditions $X_0= (x_0,x_{-1})\in \mathbb{R}_+^2$. Suppose there exists $(a,b)\in \mathbb{R}_+^2$ such that $a<b,$ $(a,b)\leq_{se} (x_k,x_{k-1})\leq_{se}(b,a)$ for some $k\geq 0$ and
\begin{equation}\label{In-ConditionsOnAandB}
a< \min\{F_0(a,b), F_1(F_0(a,b),b)\},\; b> \max\{F_0(b,a),F_1(F_0(b,a),a)\}.
\end{equation}
 If $G_{10}=G_1\circ G_0$ has a unique fixed point in $\mathbb{R}_+^4$, then the $2$-periodic system
$[F_0,F_1]$ has a global attractor, which can be an equilibrium solution or a $2$-cycle.
\end{proposition}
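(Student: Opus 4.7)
The plan is to lift the 2-periodic system to the four-dimensional embedded system $[G_0,G_1]$, exploit the southeast-monotonicity of the even-step map $G_{10}=G_1\circ G_0$ to trap its orbits in a product box, and then translate the uniqueness-of-fixed-point hypothesis into the desired equilibrium or 2-cycle conclusion for $[F_0,F_1]$ via Lemma~\ref{2periodic} and the fixed-point classification given before the proposition.

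Write $A=(a,b)$ and $B=(b,a)$. The first step is a direct computation from $G_j(x,y,u,v)=(F_j(x,y),u,F_j(u,v),x)$ showing that the four scalar inequalities in \eqref{In-ConditionsOnAandB} are exactly $(A,B)<_{se}G_{10}(A,B)$ and $G_{10}(B,A)<_{se}(B,A)$ in the southeast order on $V^2\times V^2$. Since each $F_j(\uparrow,\downarrow)$ makes each $G_j$ southeast-monotonic (as observed in Subsection~\ref{autonomous}), so is $G_{10}$. Combined with the strict box inequalities, the iterates $G_{10}^n(A,B)$ and $G_{10}^n(B,A)$ are monotone and bounded, hence converge to fixed points of $G_{10}$; uniqueness of the fixed point forces both limits to coincide at a common $\bar\zeta\in V^4$.

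Second, the hypothesis $(a,b)\leq_{se}(x_k,x_{k-1})\leq_{se}(b,a)$ places the diagonal lift $((x_k,x_{k-1}),(x_k,x_{k-1}))$ inside the southeast box $[(A,B),(B,A)]$ in $V^2\times V^2$. Iterating $G_{10}$ and invoking its monotonicity sandwiches the corresponding (even-indexed) lifted subsequence between $G_{10}^n(A,B)$ and $G_{10}^n(B,A)$, forcing it to converge to $\bar\zeta$; the remaining (odd-indexed) subsequence is pinned down by continuity of $G_0$ (or $G_1$). By the fixed-point classification preceding the proposition, the swap symmetry $(\bar x,\bar y,\bar u,\bar v)\mapsto(\bar u,\bar v,\bar x,\bar y)$ together with uniqueness forces $\bar\zeta=(\bar x,\bar y,\bar x,\bar y)$ with $F_0(\bar x,\bar y)=\bar y$ and $F_1(\bar y,\bar x)=\bar x$. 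If $\bar x=\bar y$, this is a common equilibrium of $F_0$ and $F_1$; otherwise, by Lemma~\ref{2periodic}, $\{\bar x,\bar y\}$ is the unique $2$-cycle of $[F_0,F_1]$. In either case, the orbit of $[F_0,F_1]$ is globally attracted to this object.

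The main technical obstacle is the bookkeeping that ties the four-dimensional lifted dynamics in the periodic setting to the autonomous monotone dynamics of $G_{10}$. Because $[G_0,G_1]$ alternates generators with period two, one must carefully identify which subsequence of the lifted orbit is driven purely by $G_{10}$, check that the shifted starting index $k$ in the hypothesis is compatible with the parity required for the sandwich, and verify that the strict inequalities \eqref{In-ConditionsOnAandB} propagate the trapping to every subsequent iteration. A secondary delicate point is ruling out pseudo fixed-point configurations $(\bar x,\bar y,\bar u,\bar v)$ with $(\bar u,\bar v)\neq(\bar x,\bar y)$ from the uniqueness hypothesis, which requires the swap symmetry of $G_{10}$ noted before the proposition.
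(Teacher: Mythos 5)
Your proposal is correct and follows essentially the same route as the paper: it identifies the inequalities \eqref{In-ConditionsOnAandB} with $(A,B)<_{se}G_{10}(A,B)$ (and its mirror), uses the southeast-monotonicity of $G_{10}$ together with the box-trapping hypothesis to sandwich the lifted orbit between the monotone sequences $G_{10}^n(A,B)$ and $G_{10}^n(B,A)$, and then reads off the equilibrium or $2$-cycle from the form of the unique fixed point. The only difference is that you spell out the swap-symmetry argument ruling out fixed points of the form $(\bar x,\bar y,\bar u,\bar v)$ with $(\bar u,\bar v)\neq(\bar x,\bar y)$ and the parity bookkeeping for the odd-indexed subsequence, which the paper leaves implicit.
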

\begin{proof}
The condition
$(a,b)\leq_{se} (x_k,x_{k-1})\leq_{se} (b,a)$ means that the sequence $(X_n)$ is eventually trapped in the box $[a,b]^2$. The condition \eqref{In-ConditionsOnAandB} gives us
$(A,B)< G_{10}(A,B)$, where $A=(a,b)$ and $B=(b,a)$. Since $G_{10}$ is monotonic with respect to $\leq_{se}$, we can use our formalism directly (Section 2.1, inequalities \eqref{Eq-InequalitySadokRequest}) to infer that the sequence $G_{10}^n(X_0,X_0)$ converges to the unique fixed point of $G_{10}$ in $[a,b]^4$. If the unique fixed point of $G_{10}$ is of the form $\bar \xi=(\bar x,\bar x,\bar x,\bar x),$ then $\bar x$ is an equilibrium point of $[F_0,F_1]$ and both $\{x_{2n}\},\{x_{2n-1}\}$ converge to $\bar x.$ On the other hand, if the unique fixed point of $G_{10}$ is of the form $\bar \xi=(\bar x,\bar y,\bar x,\bar y),\; \bar x\neq \bar y$ then $\{\bar x,\bar y\}$ is a $2$-cycle of $[F_0,F_1],$  and the even terms of the orbit $\{x_n\}$ converge to one branch of the $2$-cycle, while the odd terms converge to the other branch.   Hence, whether the system $[F_0,F_1]$ has an equilibrium or a $2$-cycle, it serves as a global attractor.
\end{proof}
We remark that the inequalities (\ref{In-ConditionsOnAandB}) can be replaced by the more simple inequalities $(a,b)<_{se}(F_j(a,b),F_j(b,a)),\;j=0,1.$ However, simplicity here comes at the expense of generalization.

%%%%%%%%%%%%%%%%%%%%%%%%%%%%%%%%%%%%%%%%%%%

\section{Stability under constant stocking}

Since $h>0,$ all solutions of Eq. (\ref{Eq-Ricker3}) must satisfy $y_n>h$ for all $n.$ Furthermore, an equilibrium solution $\bar y$  satisfies $\left(1-\frac{h}{\bar y}\right)=e^{r-\bar y},$ and it is obvious that we obtain a unique equilibrium solution. To stress the role of the parameter $h$ in our analysis, we denote the positive equilibrium solution by $\bar y_h.$  The Jacobian matrix at $\bar y_h$ is given by
$$J(\bar y_h,\bar y_h)=\left[
                         \begin{array}{cc}
                           e^{r-\bar y_h} & -\bar y_h e^{r-\bar y_h} \\
                           1 & 0 \\
                         \end{array}
                       \right]=\left[
                         \begin{array}{cc}
                           1-\frac{h}{\bar y_h} & h-\bar y_h \\
                           1 & 0 \\
                         \end{array}
                       \right].
$$
Let the trace of the Jacobian matrix be $T$ and the determinant be $D,$ then $0<T=1-\frac{h}{\bar y_h}<1$ and $D=\bar y_h-h>0.$ Furthermore, we have
$$D-T=\bar y_h-h+\frac{h}{\bar y_h}-1>-1.$$
Based on the Jury conditions for stability, the eigenvalues of $J$ are within the unit disk when $|T|<1+D<2$, and consequently, the equilibrium $\bar y_h$ is locally asymptotically stable (LAS) when $D<1,$ i.e., $\bar y_h<1+h.$ Since
$$ 0<e^{r-{\bar y_h}}=1-\frac{h}{\bar y_h}<1,$$
we obtain $\bar y_h>h$ and $\bar y_h > r.$
In fact, $\bar y_0=r.$ The next lemma is relevant in the sequel and describes the behavior of the fixed point in terms of $r$ and $h$.

\begin{lemma}\label{increasing}
    The fixed point $\bar y_h$ of Eq. (\ref{Eq-Ricker3}) is increasing in both $h$ and $r$.
\end{lemma}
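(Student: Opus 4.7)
The approach I would take is to rewrite the equilibrium equation as a single scalar equation parametrized by $h$ and $r$, and then apply the implicit function theorem (or, equivalently, argue via monotonicity of an auxiliary function). Starting from $\bar{y}_h = \bar{y}_h e^{r-\bar{y}_h} + h$, I would define
\[
\phi(y; r) := y\bigl(1 - e^{r-y}\bigr),
\]
so that the equilibrium condition becomes $\phi(\bar{y}_h; r) = h$. A quick sign analysis shows $\phi(y;r) < 0$ for $0 < y < r$, $\phi(r;r)=0$, and $\phi(y;r) > 0$ for $y > r$, which matches the earlier observation that $\bar y_h > r$.

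The core of the proof is to establish that $\phi(\cdot; r)$ is strictly increasing on $(r, \infty)$ for every fixed $r > 0$. Computing
\[
\phi'(y) = 1 - (1-y)e^{r-y},
\]
I would split into cases: for $y \geq 1$ the factor $(1-y)$ is non-positive, so $\phi'(y) \geq 1$; for $r < y < 1$ (only possible when $r < 1$) we have $0 < 1-y < 1$ and $e^{r-y} < 1$, so $(1-y)e^{r-y} < 1$ and again $\phi'(y) > 0$. Hence $\phi(\cdot;r)\colon(r,\infty)\to (0,\infty)$ is a strictly increasing bijection, and $\bar{y}_h = \phi^{-1}(h; r)$ is strictly increasing in $h$.

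For the dependence on $r$, I would simply differentiate: $\partial \phi/\partial r = -y\,e^{r-y} < 0$, so $\phi(y; r)$ is strictly decreasing in $r$ for each fixed $y > 0$. Therefore, to maintain the identity $\phi(\bar{y}_h; r) = h$ as $r$ increases, $\bar y_h$ must compensate by increasing (since $\phi$ is increasing in $y$). Equivalently, the implicit function theorem gives
\[
\frac{\partial \bar{y}_h}{\partial h} = \frac{1}{\bar{y}_h - h + h/\bar{y}_h}, \qquad \frac{\partial \bar{y}_h}{\partial r} = \frac{\bar{y}_h - h}{\bar{y}_h - h + h/\bar{y}_h},
\]
after simplification using $e^{r-\bar y_h} = 1 - h/\bar y_h$, and both are strictly positive since $\bar{y}_h > h > 0$.

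The only step requiring any real care is the sign of $\phi'(y)$ for small $y$; the observation that we may restrict attention to $y > r$ (because the positive equilibrium lives there) is what makes the case analysis clean. Everything else is routine implicit differentiation.
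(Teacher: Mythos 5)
Your proposal is correct and is essentially the paper's argument: the paper also implicitly differentiates the equilibrium relation $1-h/\bar y_h=e^{r-\bar y_h}$ and reads off positivity of $d\bar y_h/dh$ and $d\bar y_h/dr$ from the factor $\bar y_h-h+h/\bar y_h>0$, which is exactly your $\phi'(\bar y_h)$. Your preliminary sign analysis of $\phi'$ on $(r,\infty)$ is a harmless extra that additionally re-derives existence and uniqueness of the positive equilibrium, but the substance of the two proofs is the same.
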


\begin{proof}
This is an immediate consequence of implicitly differentiating
$\displaystyle\left(1-\frac{h}{\bar y_h}\right)=\frac{d}{dh} e^{r-{\bar y_h}}$
with respect to both $h$ and $r$. In the first case, we obtain
\begin{align*}
\left(\bar y_h-h+\frac{h}{\bar y_h}\right)\frac{d\bar y_h}{dh}=1.
\end{align*}
So that $\displaystyle \frac{d\bar y_h}{dh}>0$. Similarly, $\displaystyle \frac{d\bar y_h}{dr} > 0$ and $\bar y_h$ is increasing in $r.$
\end{proof}

Now, the question is: can $\bar y_h$ reach $h+1$ and lose its stability? If yes, then we must have $h+1=e^{h+1-r}.$ This is possible when
\begin{equation}\label{Eq-rh}
r=r_1:=h+1-\ln(h+1).
\end{equation}

\begin{lemma}\label{stable-unstable}
Consider Eq. (\ref{Eq-Ricker3}) with $h>0,$ and let $r_1$ be as defined in Eq. (\ref{Eq-rh}).  Each of the following holds true:
\begin{description}
\item{(i)} Eq. (\ref{Eq-Ricker3}) has a unique non-negative equilibrium solution $\bar y_h$ and $\bar y_h>\max\{r,h\}.$
\item{(ii)}  If $0<r<r_1,$ then $\bar y_h$ is LAS, while   if $r>r_1,$ then $\bar y_h$ is unstable.
\end{description}
\end{lemma}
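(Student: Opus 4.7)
The plan is to dispatch part (i) by a brief monotonicity argument on the equilibrium equation, then handle part (ii) by combining the Jury criterion with the monotonicity of $\bar y_h$ in $r$ that was just recorded in Lemma \ref{increasing}.

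For part (i), I would write the equilibrium equation in the form $g(y):=\left(1-\frac{h}{y}\right)-e^{r-y}=0$ on $(0,\infty)$ and observe that $y\mapsto 1-h/y$ is strictly increasing from $-\infty$ to $1$ (hitting $0$ at $y=h$), while $y\mapsto e^{r-y}$ is strictly decreasing from $e^r$ to $0$. The two curves therefore cross at exactly one point $\bar y_h$, giving existence and uniqueness. Evaluating at $y=h$ shows $g(h)=-e^{r-h}<0$, so $\bar y_h>h$; and since $1-h/\bar y_h<1$ forces $e^{r-\bar y_h}<1$, we also get $\bar y_h>r$. This already appears in the preamble of the section, so I would simply collect those remarks into one short paragraph.

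For the local stability in part (ii), I would use the Jacobian and its characteristic polynomial $\lambda^2-T\lambda+D=0$ already computed, with $T=1-h/\bar y_h\in(0,1)$ and $D=\bar y_h-h>0$. The Jury criterion for a hyperbolic equilibrium reduces to $|T|<1+D$ and $D<1$. Since $0<T<1$ and $D>0$, the first inequality is automatic, so
\[
\bar y_h \text{ is LAS} \iff D<1 \iff \bar y_h<h+1.
\]
The bifurcation value $\bar y_h=h+1$ plugged into the equilibrium equation $1-h/\bar y_h=e^{r-\bar y_h}$ gives $\frac{1}{h+1}=e^{r-h-1}$, i.e. $r=h+1-\ln(h+1)=r_1$. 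By Lemma \ref{increasing}, $\bar y_h$ is strictly increasing in $r$, so $r<r_1\Leftrightarrow \bar y_h<h+1$, yielding LAS in that range.

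For $r>r_1$, Lemma \ref{increasing} gives $\bar y_h>h+1$, hence $D>1$. Since $T^2<1<D\leq 4D$, the discriminant $T^2-4D$ is negative, so the eigenvalues are a complex conjugate pair of modulus $\sqrt{D}>1$; therefore $\bar y_h$ is unstable. This simultaneously flags the Neimark--Sacker scenario announced in the abstract. The main obstacle is nothing more than cleanly tying the algebraic condition $\bar y_h=h+1$ to the closed-form $r_1$; everything else is bookkeeping from material already proved.
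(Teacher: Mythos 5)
Your proposal is correct and follows essentially the same route as the paper: part (i) collects the uniqueness and $\bar y_h>\max\{r,h\}$ remarks already made in the section's preamble, and part (ii) combines the Jury-condition equivalence ``LAS $\iff \bar y_h<h+1$'' with the monotonicity of $\bar y_h$ in $r$ from Lemma \ref{increasing} and the identification of $r_1$ as the parameter value where $\bar y_h=h+1$. The only difference is that you spell out the instability case $r>r_1$ (complex eigenvalues of modulus $\sqrt{D}>1$) explicitly, which the paper leaves implicit.
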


\begin{proof}
   Part (i) is clear. Part (ii) remains to be clarified. Recall that $\bar y_h$ is LAS if and only if $\bar y_h<h+1$; in turn, we claim this happens iff $r<r_1$. But $h+1$ is the value of $\bar y_h$ when $r=r_1$, and so the claim is a consequence of the fact that $\bar y_h$ is increasing in $r.$  When $r=r_1$, the claim and its proof are standard.
\end{proof}
When $r=r_1,$ Eq. (\ref{Eq-Ricker3}) has the potential to go through a Neimark-Sacker bifurcation (see Fig. \ref{Fig-NeimarkSacker1}).
Note that since $r_1$ is increasing in $h,$ $0<r<r_0=1$ gives LAS regardless of the value of $h.$
Our next example illustrates the validity of Part (ii) of Lemma \eqref{stable-unstable}. Also, Fig. \ref{Fig-StabilityRegions1} shows the stability region in the $(h,r)$-plane.

\begin{example}
Let $h=e-1$ and $r=2.$
Then $r_1=e-1 < 2=r$. In this case,
we obtain $\bar y_h\approx 2.898,$  and the eigenvalues of the Jacobian matrix are $\lambda\approx 0.204 \pm 1.067i.$ They are out of the unit disk, and consequently, the equilibrium is unstable. On the other hand, if we consider $h=e-1$ and $r=\frac{3}{2},$ then $r_1=e-1>r$, $\bar y_h\approx 2.589$ and, the eigenvalues of the Jacobian matrix are $\lambda\approx 0.168\pm 0.918i.$ The eigenvalues are located within the unit disk; consequently, the equilibrium is LAS.
\end{example}
\begin{figure}[htpb]
    \centering
    \includegraphics[width=0.6\textwidth]{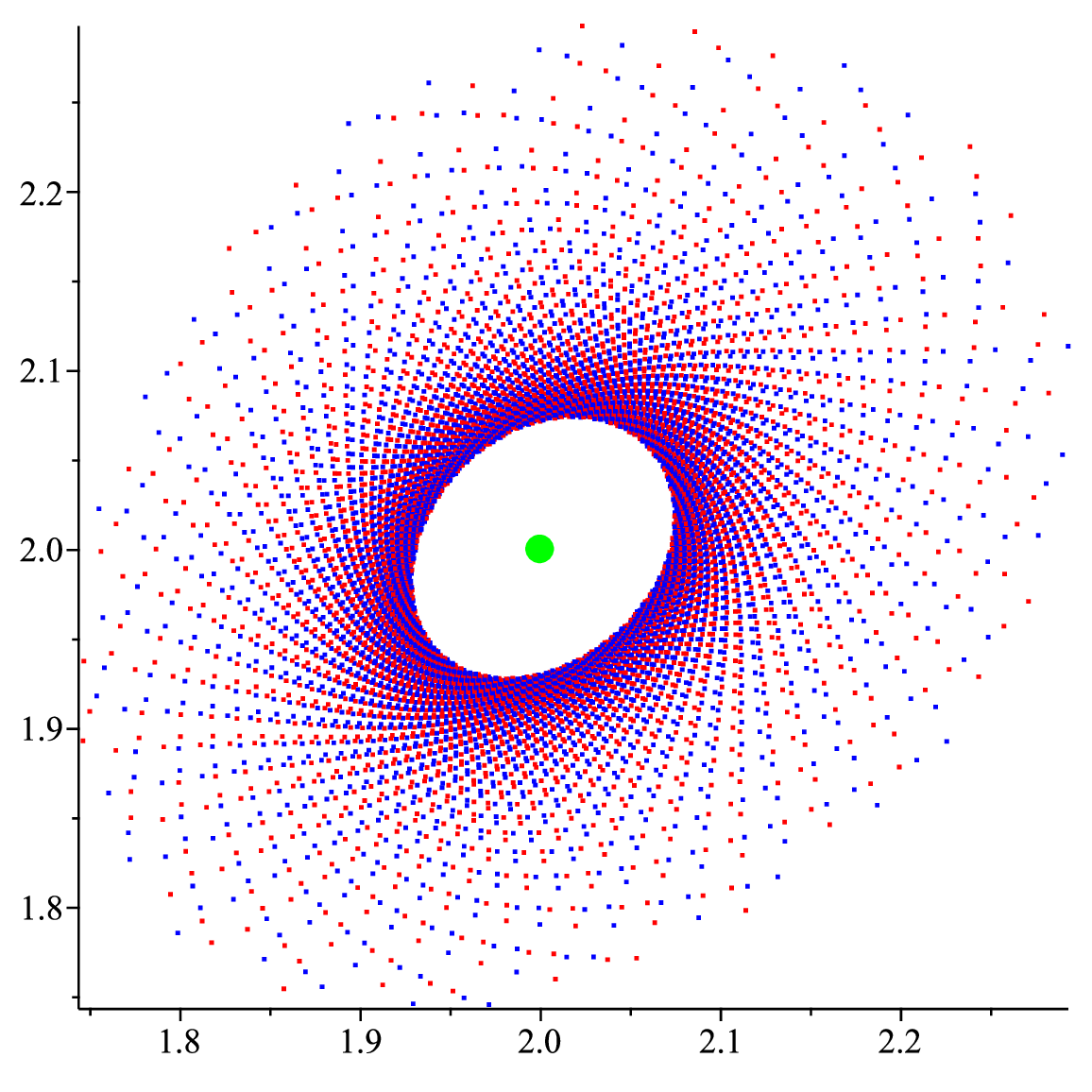}
    \caption{Let $r=r_1.$ This figure shows the equilibrium solution $y_h$ (solid green circle) undergoes a Neimark-Sacker bifurcation at $h=1.0.$  The red color represents the even terms in the orbits, while the blue represents the odd terms. The horizontal axis is for $x_{n}$, and the vertical axis is for $x_{n-1}.$}
    \label{Fig-NeimarkSacker1}
\end{figure}

Next, we define the $4$-dimensional map
$$G:\; \mathbb{R}_+^4\;\to \mathbb{R}_+^4, \quad \text{where} \quad G(x,y,u,v)=(xf(y)+h,u,uf(v)+h,x).$$
Again here, $f(y)=e^{r-y}$. We solve $G(\xi)=\xi $ to find the fixed points of $G.$ We obtain $\xi=(x^*,y^*,y^*,x^*),$ where $x^*=x^*f(y^*)+h$ and $y^*=y^*f(x^*)+h$, with $x^*\leq y^*$. The case $x^*=y^*$ is when both are equal $\bar y_h,$ and we write the symmetric solution $\bar \xi=\bar \xi_h=(\bar y_h,\bar y_h,\bar y_h,\bar y_h).$

Let $L_1$ and $L_2$ denote the curves of $y=yf(x)+h$ and $x=xf(y)+h,$ respectively. We discuss how these curves intersect since the intersection points (i.e. the ``solutions'') correspond to the fixed points of $G$. The fixed point $(\bar y_h,\bar y_h)$ is always a solution. Clearly, $L_1$ has a vertical asymptote at $x=r$ and a horizontal asymptote at $y=h,$ whereas $L_2$ has a vertical asymptote at $x=h$ and a horizontal asymptote at $y=r.$  A variation of $h$ makes asymmetric solutions bifurcate from the symmetric ones. The slope $\displaystyle {dy\over dx}$ of $L_1$ at $\bar y_h$ is $\frac{1}{h} \bar y_h(h-\bar y_h),$ and the reciprocal of this expression is the slope of $L_2.$ Since the slope is negative and $\bar y_h>h,$ we consider the solution of  $\bar y_h(\bar y_h-h)=h$ to be
\begin{equation}\label{Eq-H*}
H^*:=\frac{1}{2}\left(h+\sqrt{h^2+4h}\right).
\end{equation}
In this case,
\begin{equation}\label{Eq-r2}
    r=r_2:=H^*+\ln (H^*-h)-\ln(H^*),\ \ \
    \ (\hbox{and}\ \bar y_h=H^*).
\end{equation}
Notice that we always have (we skip the computation) that
\begin{equation}\label{r1r2h}
r_2 < r_1 \quad \text{and{\quad $r_1<h$\quad\text{if}\quad $h > e-1$}}.
\end{equation}
To recapitulate, for a given $h$, and for
$r=r_2$, the fixed point of Eq. (\ref{Eq-Ricker3})
is $\bar y_h = H^*$, and at the solution $x=y = H^*$, both curves have slope $-1$.  The number of solutions of $L_1\cap L_2$ depends on whether $\bar y_h$ is larger or smaller than $H^*$, but this also depends on whether $h<r$ or $h>r$. From a combinatorial standpoint, there are four options; however,  $(h,H^*)<_{se}(r,\bar y_h)$ is not viable according to the following result.

\begin{lemma}\label{hlessr}
    If $h\leq r,$ then $H^*\leq \bar y_h.$
\end{lemma}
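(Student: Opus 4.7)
The plan is to exploit the monotonicity of $\bar y_h$ in $r$ proved in Lemma~\ref{increasing}, reducing the claim to a single-variable logarithmic inequality that does not involve $r$ at all.

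By definition \eqref{Eq-r2}, $r=r_2$ is exactly the value of $r$ at which $\bar y_h=H^*$; since Lemma~\ref{increasing} gives that $\bar y_h$ is strictly increasing in $r$, the desired inequality $H^*\leq \bar y_h$ is equivalent to $r_2\leq r$. Given the hypothesis $h\leq r$, it therefore suffices to prove the $r$-free estimate $r_2\leq h$.

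The computational heart of the proof is a simplification of $r_2-h$. From $H^{*2}-hH^*-h=0$ (the defining equation \eqref{Eq-H*}) I extract two useful identities: $H^*-h=h/H^*$ and $H^{*2}=h(H^*+1)$, so that $(H^*-h)/H^*=1/(H^*+1)$. Substituting these into $r_2=H^*+\ln(H^*-h)-\ln(H^*)$ collapses the difference to
$$r_2-h=(H^*-h)-\ln(H^*+1).$$
Setting $u:=H^*+1$ and noting that $H^*-h=(u-1)/u$ (since $h=(u-1)^2/u$), the right-hand side becomes the clean expression $r_2-h=(1-1/u)-\ln u$.

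The proof is then closed by the classical inequality $\ln u\geq 1-1/u$ for all $u>0$, with equality only at $u=1$; this is immediate from the sign analysis of $g(u)=\ln u-1+1/u$, whose derivative $(u-1)/u^2$ vanishes only at $u=1$. For $h>0$ we have $u>1$, so the inequality is strict and $r_2<h\leq r$, whence $\bar y_h>H^*$ by Lemma~\ref{increasing}. I anticipate no substantive obstacle here; the only care required is the algebraic bookkeeping that reveals the standard logarithmic inequality behind the claim.
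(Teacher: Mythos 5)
Your proof is correct, but it takes a genuinely different route from the paper's. The paper argues directly on the equilibrium equation: from $1+t\leq e^t$ it gets $1+h-\bar y_h\leq e^{h-\bar y_h}\leq e^{r-\bar y_h}$, then multiplies through by $\bar y_h$ and uses $h=\bar y_h\left(1-e^{r-\bar y_h}\right)$ to conclude $h\leq \bar y_h(\bar y_h-h)$, i.e.\ $\bar y_h\geq H^*$ --- three lines, no mention of $r_2$. You instead reduce the claim, via the strict monotonicity of $\bar y_h$ in $r$ from Lemma \ref{increasing} and the fact that $\bar y_h=H^*$ exactly at $r=r_2$, to the $r$-free inequality $r_2\leq h$, and then verify that by algebra on the defining relation $H^{*2}=h(H^*+1)$, landing on $\ln u\geq 1-1/u$. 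Your algebra checks out ($r_2=H^*-\ln(H^*+1)$ and $H^*-h=1-1/u$ with $u=H^*+1$), and amusingly both arguments bottom out in the same classical inequality, since $\ln u\geq 1-1/u$ is $1+t\leq e^t$ in disguise. The paper's proof is shorter and self-contained; yours costs more bookkeeping but buys a sharper byproduct: the unconditional strict inequality $r_2<h$ for all $h>0$, which the paper never proves explicitly even though its Lemma \ref{Lem-Intersections}(ii)--(iii) and the region $r_2<r<h$ in Theorem \ref{Th-GlobalStabilit1}(ii) implicitly rely on that case being nonvacuous. Both approaches are valid; just note that your equivalence step silently uses uniqueness of the equilibrium for each $(h,r)$ and that $H^*$ is independent of $r$, both of which the paper establishes, so there is no gap.
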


\begin{proof}
Because $1+t\leq e^t$ for all $t\in \mathbb{R},$ then $1+h-\bar y_h\leq e^{h-\bar y_h}.$
Since $h\leq r,$ we obtain that
$1+h-\bar y_h\leq e^{r-\bar y_h},$
and consequently,
$$1-e^{r-\bar y_h}\leq \bar y_h-h \quad \Longleftrightarrow\quad \bar y_h(1-e^{r-\bar y_h})\leq \bar y_h(\bar y_h-h) .$$
From the fact that $\bar y_h$ is an equilibrium point, we obtain $h=\bar y_h(1-e^{r-\bar y_h}),$
and therefore, $h\leq \bar y_h(\bar y_h-h),$ which gives us $\bar y_h\geq H^*.$
\end{proof}

The following lemma summarizes and settles all the cases.

\begin{lemma}\label{Lem-Intersections}
Consider $r_2$ as defined in Eq. (\ref{Eq-r2}). Let $L_1$ and $L_2$ denote the curves of $y=F(y,x)=yf(x)+h$ and $x=F(x,y)=xf(y)+h,$ respectively. Each of the following holds true:
\begin{description}
\item{(i)} If $h<r,$ then $H^*<\bar y_h$ and $L_1$ and $L_2$ intersect at the unique point $(\bar y_h,\bar y_h).$
\item{(ii)} If $r< r_2 < h,$ then $\bar y_h<  H^*$ and $L_1$ and $L_2$ intersect at the unique point $(\bar y_h,\bar y_h).$
\item{(iii)} If $r_2<r<h,$ then $H^*< \bar y_h$ and $L_1$ and $L_2$ intersect at three  points denoted by $(\bar y_h,\bar y_h), (x^*,y^*) $ and $ (y^*,x^*),$ where $x^*\neq y^*.$
\end{description}
\end{lemma}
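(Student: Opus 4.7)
The plan is to parametrize both curves as graphs of smooth, strictly decreasing, strictly convex functions, exploit the reflection symmetry $L_2\equiv L_1^{-1}$ across the diagonal $y=x$, and combine boundary asymptotics with a slope analysis at $(\bar y_h,\bar y_h)$. Writing $L_1(x)=h/(1-e^{r-x})$ for $x>r$ and $L_2(x)=r-\ln(1-h/x)$ for $x>h$, the asymptotes claimed in the paper are immediate. The reflection symmetry implies that intersections off the diagonal occur in reflected pairs $(x^*,y^*)\leftrightarrow(y^*,x^*)$, while the only intersection on the diagonal is $(\bar y_h,\bar y_h)$ by uniqueness of the equilibrium.

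Set $\phi(x):=L_1(x)-L_2(x)$ on the common domain $D=(\max\{h,r\},\infty)$; its boundary values are $\phi(r^+)=+\infty$ and $\phi(+\infty)=h-r<0$ in case (i), while $\phi(h^+)=-\infty$ and $\phi(+\infty)=h-r>0$ in cases (ii) and (iii). Implicit differentiation gives
$$L_1'(\bar y_h)=\frac{\bar y_h(h-\bar y_h)}{h}<0,$$
and by reflection $L_2'(\bar y_h)=1/L_1'(\bar y_h)$, so $\phi'(\bar y_h)$ has the same sign as $1-(L_1'(\bar y_h))^2$. The borderline $|L_1'(\bar y_h)|=1$ reduces to $\bar y_h(\bar y_h-h)=h$, i.e.\ $\bar y_h=H^*$, equivalently $r=r_2$. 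Combining Lemma \ref{hlessr} with the monotonicity of $\bar y_h$ in $r$ gives $\bar y_h>H^*$ in cases (i) and (iii), and $\bar y_h<H^*$ in case (ii). Matching these with the boundary data reproduces the geometry in each case: in case (i), $\phi$ descends from $+\infty$ through $0$ at $\bar y_h$ to a negative limit; in case (ii), $\phi$ ascends from $-\infty$ through $0$ at $\bar y_h$ to a positive limit; in case (iii), $\phi$ rises from $-\infty$ to a positive limit yet decreases through $0$ at $\bar y_h$, which forces an additional zero on each side of $\bar y_h$ and, by the reflection, these zeros form the pair $(x^*,y^*),(y^*,x^*)$ required by the statement.

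The main obstacle is ruling out any further zeros. I would analyze the ratio $R(x):=L_1'(x)/L_2'(x)$, whose equation $R(x)=1$ pinpoints the critical points of $\phi$ because $\phi'=L_2'(R-1)$ with $L_2'<0$. Logarithmic differentiation, after the identity $2/(1-e^{r-x})-1=\coth((x-r)/2)$, yields
$$\frac{R'(x)}{R(x)}=\frac{1}{x}+\frac{1}{x-h}-\coth\!\Big(\frac{x-r}{2}\Big).$$
In case (i), the classical inequality $\coth(t)>1/t$ together with the algebraic check that $2/(x-r)>1/x+1/(x-h)$ holds on $(r,\infty)$ whenever $r>h$ gives $R'/R<0$ throughout, so $R$ is strictly decreasing, $R=1$ has a unique root, and $\phi$ has a single minimum and hence a unique zero. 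In cases (ii) and (iii), $R$ rises from $0$ at $h^+$ to a single maximum and decays back to $0$ at $+\infty$; at $r=r_2$ an explicit computation shows this maximum is attained at $\bar y_h=H^*$ with value exactly $1$, with a positive derivative in $r$. An envelope-type continuity argument then forces the maximum to stay strictly below $1$ in case (ii) (so $\phi$ is monotone and the zero is unique) and strictly above $1$ in case (iii) (so $\phi$ acquires two critical points and exactly three zeros, the middle being $\bar y_h$). The technically delicate point, which one would handle through a direct calculus argument on the explicit formula for $R'/R$, is to verify that $R$ has a single critical point on $D$ throughout each regime.
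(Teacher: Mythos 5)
Your reduction of the problem to the sign analysis of $\phi=L_1-L_2$ and of the ratio $R=L_1'/L_2'$ is a legitimately different route from the paper's: for part (i) the paper fixes $x$, shows $\frac{d}{dh}(L_2-L_1)<0$, and checks positivity at the extreme value $h=r$, whereas you work directly in $x$ and prove $R$ is strictly decreasing via $\coth(t)>1/t$ together with $\frac{2}{x-r}>\frac{1}{x}+\frac{1}{x-h}$ (which does hold for $x>r\geq h$, since it reduces to $x(2r-h)>hr$). Your computation of $R'/R$ is correct, your slope identity $L_1'(\bar y_h)=\bar y_h(h-\bar y_h)/h$ matches the paper, and the deduction of $H^*$ versus $\bar y_h$ from Lemma \ref{hlessr} and Lemma \ref{increasing} is exactly the paper's argument. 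For part (i) your proof is complete and arguably cleaner than the paper's.

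For parts (ii) and (iii), however, your proposal has a genuine gap precisely where the mathematical content lies. The entire counting of intersections rests on two claims you do not establish: that $R$ is unimodal on $(h,\infty)$, and that its maximum is below $1$ in case (ii) and above $1$ in case (iii). Unimodality of $R$ does not follow from the formula for $R'/R$ by inspection, since $-\coth\bigl(\tfrac{x-r}{2}\bigr)$ is increasing while $\tfrac1x+\tfrac1{x-h}$ is decreasing, so $R'/R$ is not obviously monotone; you acknowledge this but leave it unresolved. Worse, the anchor of your ``envelope-type continuity argument'' is the assertion that at $r=r_2$ the maximum of $R$ is attained at $\bar y_h=H^*$ with value exactly $1$. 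What is true is only that $R(\bar y_h)=(L_1'(\bar y_h))^2=1$ at $r=r_2$; there is no reason $H^*$ should be a critical point of $R$ (that would require $\tfrac{1}{H^*}+\tfrac{1}{H^*-h}=\coth\bigl(\tfrac{H^*-r_2}{2}\bigr)$, which you have not checked and which has no reason to hold). In case (iii) the conclusion $\max R>1$ is automatic from $R(\bar y_h)>1$, but in case (ii) the inequality $R(\bar y_h)<1$ at the single point $\bar y_h$ does not give $\max R<1$, and without it $\phi$ could a priori have three zeros even though $\phi'(\bar y_h)>0$. So the uniqueness claim in (ii) and the ``exactly three'' claim in (iii) are not proved. (To be fair, the paper's own proof of (ii) and (iii) merely says the argument ``follows along the same outline as in (i),'' which is also not a complete verification; but your proposal makes explicit promises — the computation of the argmax of $R$ at $r=r_2$ — that appear to be false as stated, and the continuity argument cannot be repaired without first locating where $R$ attains its maximum or bounding it globally.)
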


\begin{proof} (i) The two curves $L_1$ and $L_2$
always intersect at the point $(\bar y_h,\bar y_h)$ along the diagonal.
Since the curves are symmetric images of one another with respect to the $y=x$ axis, we can confine ourselves to the case
$x> \bar y_h> \max\{r,h\}$ and investigate other intersection points. Express $y$ explicitly for both $L_1$ and $L_2$ to find, respectively, that
$$y_1 = {h\over 1-e^{r-x}}\quad \text{and}\quad
y_2 = r-\ln \left(1- {h\over x}\right).$$
We show $y_2-y_1>0$ for all $x>\bar y_h$  when $h<r.$ For a fixed value of $x,$ we show that $y_2-y_1$ is positive and minimum at $h=r.$ We have
$${d\over dh}(y_2-y_1) = {-1\over 1-e^{r-x}} + {1\over x-h}$$
and $e^{r-x}> 1+r-x$, for all $x>r.$
Therefore,
${1\over 1-e^{r-x}}> {1\over x-r}$, and since  $h<r<\bar y_2<x,$  we obtain
$${d\over dh}(y_2-y_1) < {-1\over x-r} + {1\over x-h}<0.$$
So $y_2-y_1$ is decreasing and takes its minimum at $h=r.$ At $h=r,$
$$y_2-y_1={-r\over 1-e^{r-x}} + r - \ln \left(1-{r\over x}\right)>0.$$
 Hence, the minimum of $y_2-y_1$ is strictly positive, and $y_2\neq y_1$.  This shows that $L_1$ and $L_2$ cannot intersect again for $x>\bar y_h> \max\{r,h\}$, and also by symmetry, for $x<\bar y_h$. This completes the proof of Part (i).

To show (ii) and (iii), we have shown in Lemma \ref{increasing} that the fixed point $\bar y_h$ is an increasing function of $r.$ Therefore, if $r<r_2<h$, then necessarily $\bar y_h<H^*$, since $H^*$ is the fixed point when $r=r_2$. This establishes the first part of the claims in (ii) and (iii). The second part in each case follows along the same outline as in (i).
\end{proof}

Figures \ref{Fig-Intersections1} and \ref{Fig-Intersections2} illustrate all three possible scenarios in Lemma \ref{Lem-Intersections}.

%%%%%%%%%%%%%%%%%%%%%%%%%%%%%%%%%%%%%%%%%%%%%%%%%%%%%%%%%%%%%%%%%%%%%%%%%%%%%%%%%%%%%%%%%%%%%%%%%%%%%%%%%%%%%%%%%%%%%%%%%%%%%%%%%%%%%
\definecolor{ffqqqq}{rgb}{1.,0.,0.}
\definecolor{qqqqff}{rgb}{0,0,1}
\definecolor{zzttqq}{rgb}{0.6,0.2,0}
\begin{figure}[H]
\centering
\begin{minipage}[t]{0.5\textwidth}
\raggedright
\begin{center}
\begin{tikzpicture}[line cap=round,line join=round,>=triangle 45,x=1.0cm,y=1.0cm,scale=0.5]
%\draw[thin,color=gray,step=1.0cm, dashed,scale=1.0] (0,0) grid (7.,7.);
\draw[-triangle 45, line width=1.0pt,scale=1] (0,0) -- (10.0,0) node[below] {$x$};
\draw[line width=1.0pt,-triangle 45] (0,0) -- (-1,0);
\draw[-triangle 45, line width=1.0pt,scale=1] (0,0) -- (0,10) node[left] {$y$};
\draw[line width=1.0pt,-triangle 45] (0,0) -- (0.0,-1);
\draw[line width=1.2pt,domain=0:9.0,smooth,variable=\x,green] plot ({\x}, {\x)});
\draw[line width=1pt,color=red,smooth,samples=100,domain=2.08:10.0] plot(\x,{0.7/(1-exp(2-\x))});
\draw[line width=1pt,color=blue,smooth,samples=100,domain=0.701:10.0] plot(\x,{2-ln(1-0.7/(\x))});
\draw[scale=1] (0.7,8.5) node[above] {\footnotesize $L_2 $};
\draw[scale=1] (2.0,9.0) node[above] {\footnotesize $L_1 $};
\draw[scale=1] (0,2) node[left] {\footnotesize $r $};
\draw[scale=1] (0,0.7) node[left] {\footnotesize $h $};
\draw[line width=0.6pt,dashed,color=black] (0,0.7) -- (9.0,0.7);
\draw[line width=0.6pt,dashed,color=black] (0,2.0) -- (9.0,2.0);
\draw[line width=1.0pt,red,fill=yellow] (2.35,2.35) circle (2.0pt);
\draw[scale=1] (5,-0.3) node[below] {\footnotesize (i) $r=2, h=0.7$};
 %%%%%%%%%%%%%%%%%%%%%
%\draw [-,green] (0,0) -- (1,1);
\end{tikzpicture}
\end{center}
\end{minipage}%
%%%%%%%%%%%%%%%%%%%%%%%%%%%%%%%%%%%%%%%%%%%%%%%%%%%%%%%%%%%%%%%%%%%%%%%%%%%%%%%%%%%%%%%%%%%%
\begin{minipage}[t]{0.5\textwidth}
\raggedright
\begin{center}
\begin{tikzpicture}[line cap=round,line join=round,>=triangle 45,x=1.0cm,y=1.0cm,scale=0.5]
\draw[-triangle 45, line width=1.0pt,scale=1] (0,0) -- (10.0,0) node[below] {$x$};
\draw[line width=1.0pt,-triangle 45] (0,0) -- (-1,0);
\draw[-triangle 45, line width=1.0pt,scale=1] (0,0) -- (0,10) node[left] {$y$};
\draw[line width=1.0pt,-triangle 45] (0,0) -- (0.0,-1);
\draw[line width=1.2pt,domain=0:9.0,smooth,variable=\x,green] plot ({\x}, {\x)});
\draw[line width=1pt,color=red,smooth,samples=100,domain=2.4:10.0] plot(\x,{3.0/(1-exp(2-\x))});
\draw[line width=1pt,color=blue,smooth,samples=100,domain=3.004:10.0] plot(\x,{2-ln(1-3.0/(\x))});
\fill[line width=2pt,color=zzttqq,fill=zzttqq,fill opacity=0.300] (3.0,9.0) -- (3.007,8.000) -- (3.020,7.000) -- (3.056,6.003) -- (3.100,5.430) -- (3.159,4.9898) -- (3.4,4.248) -- (3.68,3.68) --(3.2,4.29)-- (3.03,4.66) -- (2.75,5.68) -- (2.6,6.64) -- (2.57,6.9) -- (2.46,8.1) -- (2.43,8.58) -- (2.405,9.0) -- (3,9) -- cycle;

\draw[scale=1] (3.2,8.5) node[above] {\footnotesize $L_2 $};
\draw[scale=1] (9.0,3.0) node[above] {\footnotesize $L_1 $};
\draw[scale=1] (0,2) node[left] {\footnotesize $r $};
\draw[scale=1] (0,3) node[left] {\footnotesize $h $};
\draw[line width=0.6pt,dashed,color=black] (0,3.0) -- (9.0,3.0);
\draw[line width=0.6pt,dashed,color=black] (0,2.0) -- (9.0,2.0);
\draw[line width=1.0pt,red,fill=yellow] (3.7,3.7) circle (2.0pt);
\draw[scale=1] (5,-0.3) node[below] {\footnotesize (ii) $r=2, h=3.0$};
\end{tikzpicture}
\end{center}
\end{minipage}%
\caption{Part (i) of this figure shows the curves of $L_1$ and $L_2$ when $H^*\leq \bar y_h$ and $h<r.$ The unique intersection point is $(\bar y_h,\bar y_h).$  Part (ii) shows the unique intersection point $(\bar y_h,\bar y_h)$ but when $\bar y_h\leq H^*$ and $h>r.$ The shaded region in Part (ii) represents the solution of $(x,y)\leq_{se} (F(x,y),F(y,x))$ in which $x\leq y.$  }\label{Fig-Intersections1}
\end{figure}
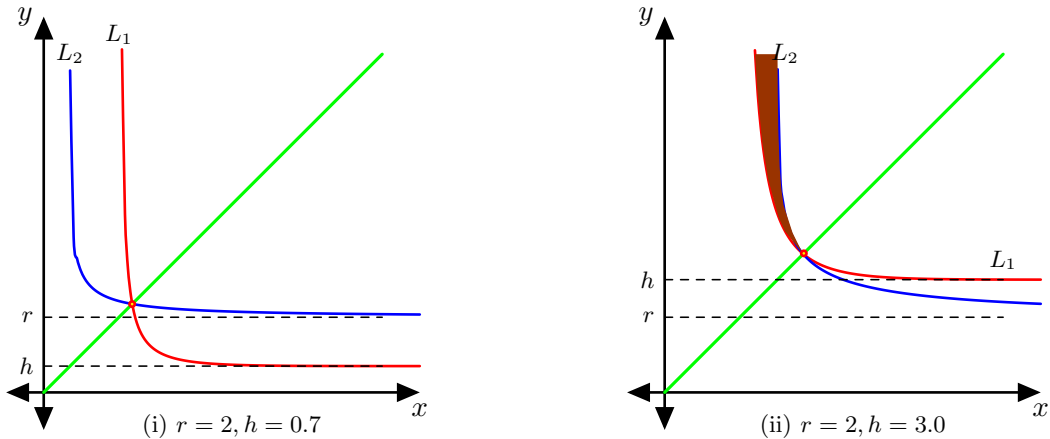
%%%%%%%%%%%%%%%%%%%%%%%%%%%%%%%%%%%%%%%%%%%%%%%%%%%%%%%%%%%%%%%%%%%%%%%%%%%%%%%%%%%%%%%%%%%%%%%%%%%%%%%%%%%%%%%%%%%%%%%%%%%%%%%%%%%%%

%%%%%%%%%%%%%%%%%%%%%%%%%%%%%%%%%%%%%%%%%%%%%%%%%%%%%%%%%%%%%%%%%%%%%%%%%%%%%%%%%%%%%%%%%%%%%%%%%%%%%%%%%%%%%%%%%%%%%%%%%%%%%%%%%%%%%
%%%%%%%%%%%%%%%%%%%%%%%%%%%%%%%%%%%%%%%%%%%%%%%
%\definecolor{ffqqqq}{rgb}{1.,0.,0.}
%\definecolor{qqqqff}{rgb}{0,0,1}
%\definecolor{zzttqq}{rgb}{0.6,0.2,0}
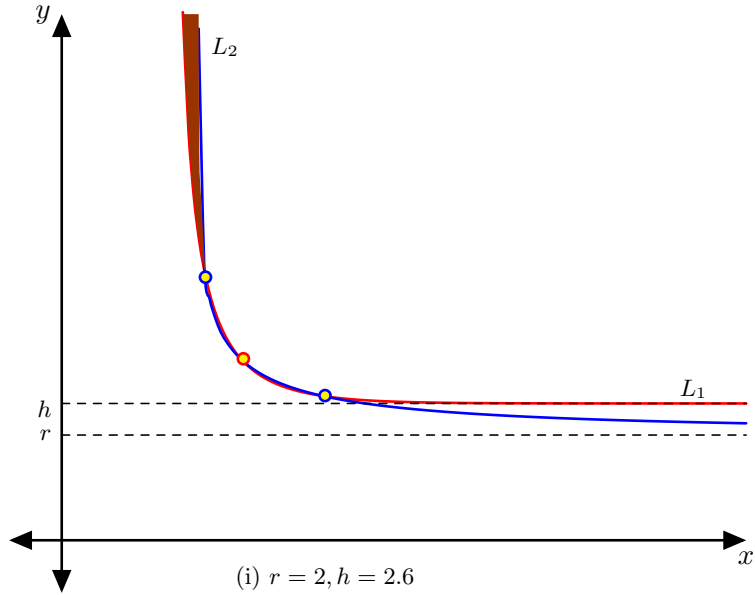
\begin{figure}[h!]
\centering
\begin{minipage}[t]{0.8\textwidth}
\raggedright
\begin{center}
\begin{tikzpicture}[line cap=round,line join=round,>=triangle 45,x=1.0cm,y=1.0cm,scale=0.7]
\draw[-triangle 45, line width=1.0pt,scale=1] (0,0) -- (13.0,0) node[below] {$x$};
\draw[line width=1.0pt,-triangle 45] (0,0) -- (-1,0);
\draw[-triangle 45, line width=1.0pt,scale=1] (0,0) -- (0,10) node[left] {$y$};
\draw[line width=1.0pt,-triangle 45] (0,0) -- (0.0,-1);
%\draw[line width=1.2pt,domain=0:9.0,smooth,variable=\x,green] plot ({\x}, {\x)});
\draw[line width=1pt,color=red,smooth,samples=100,domain=2.3:13.0] plot(\x,{2.6/(1-exp(2-\x))});
\draw[line width=1pt,color=blue,smooth,samples=100,domain=2.601:13.0] plot(\x,{2-ln(1-2.6/(\x))});
\fill[line width=2pt,color=zzttqq,fill=zzttqq,fill opacity=0.300]  (2.7319,5.009) -- (2.5666,6.0108) -- (2.462467183264971,7.021866377005591) -- (2.3909833483784224,8.03439756408671) -- (2.339442791964597,9.03301683961519) -- (2.299946835151726,10.0)--(2.6008901528549897,10.0) -- (2.6024865996029796,8.953306489679896) -- (2.6065049743044892,7.993199326301251) -- (2.6177719710537324,6.992455813948611) -- cycle;
\draw[scale=1] (0,2) node[left] {\footnotesize $r $};
\draw[scale=1] (0,2.5) node[left] {\footnotesize $h $};
\draw[scale=1] (3.1,9.0) node[above] {\footnotesize $L_2 $};
\draw[scale=1] (12.0,2.5) node[above] {\footnotesize $L_1 $};
\draw[line width=0.6pt,dashed,color=black] (0,2.6) -- (13.0,2.6);
\draw[line width=0.6pt,dashed,color=black] (0,2.0) -- (13.0,2.0);
\draw[line width=1.0pt,red,fill=yellow] (3.45,3.45) circle (3.0pt);
\draw[line width=1.0pt,blue,fill=yellow] (2.73,5.00) circle (3.0pt);
\draw[line width=1.0pt,blue,fill=yellow] (5.0,2.75) circle (3.0pt);
\draw[scale=1] (5,-0.3) node[below] {\footnotesize (i) $r=2, h=2.6$};
\end{tikzpicture}
\end{center}
\end{minipage}%
\caption{This figure shows the curves of $L_1$ and $L_2$ when $H^*\leq \bar y_h$ and $r<h.$ The three  intersection points are emphasized, and for the given choice of our parameters, they are   $(2.741,4.969), (3.424, 3.424)$ and $ (4.969,2.741).$
The shaded region represents the solution of $(x,y)\leq_{se} (F(x,y),F(y,x))$ in which $x\leq y.$     }\label{Fig-Intersections2}
\end{figure}
%%%%%%%%%%%%%%%%%%%%%%%%%%%%%%%%%%%%%%%%%%%%%%%%%%%%%%%%%%%%%%%%%%%%%%%%%%%%%%%%%%%%%%%%%%%%%%%%%%%%%%%%%%%%%%%%%%%%%%%%%%%%%%

 \begin{remark}\label{feasible}\rm The inequalities $(x,y)\leq_{se} (F(x,y),F(y,x))$ and $x\leq y$ have a feasible solution in case (ii) and case (iii) of Lemma \ref{Lem-Intersections}, that is when $h>r$. This is essential for applying the embedding technique. Concerning the intersections between the curves $L_1$ and $L_2,$ let $g_1(t)=\frac{h_0}{1-f(t)},\; t>r.$ The function $g_1$ is decreasing in $t.$ Since an intersection point $(a,b)$ must satisfy $(a,b)=(g_1(b),g_1(a)),$ the fixed point of $g_1$ is the fixed point of $F,$ and the $2$-cycles of $g_1$ form the pseudo fixed points of $F.$ Since a period-doubling bifurcation occurs when the fixed point of $g_1$ loses its stability, the existence of pseudo-fixed points can be investigated through the local stability of the fixed point of $g_1.$
 \end{remark}

After establishing the machinery and lemmas needed, we can present one of our first main results. Recall that the fixed points of the embedded map $G$ are associated with the intersection points of $L_1$ and $L_2$. In particular, $G$ has a unique fixed point given by $\xi=\bar \xi_h$ when cases (i) and (ii) of Lemma \ref{Lem-Intersections} are satisfied, and $G$ has three fixed points in case (iii) of that Lemma. As done earlier, we denote the asymmetric fixed points of $G$ by $\bar \xi_1=(x^*,y^*,y^*,x^*)$ and $\bar \xi_2=(y^*,x^*,x^*,y^*)$ where  $x^*<y^*.$ In other words, $(x^*,y^*)$ and $(y^*,x^*)$ are the pseudo fixed points of $F.$

\begin{theorem}\label{Th-GlobalStabilit1}
Consider  $r_{2}$ as given in Eq. (\ref{Eq-r2}). Each of the following holds true for Eq. (\ref{Eq-Ricker3}):
\begin{description}
\item{(i)} If $r\leq r_2,$ then $\bar y_h$ is globally asymptotically stable.
\item{(ii)} If $r_2<r<h,$ then the compact box $[x^*,y^*]^2$ forms an absorbing region. In particular,
$$x^*\leq \liminf \{x_n\}\leq \limsup \{x_n\}\leq y^*.$$
\end{description}
\end{theorem}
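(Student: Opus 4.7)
The approach is to apply Proposition \ref{Prop-GlobalStability} after first establishing a uniform upper bound on the orbit. A unifying preliminary fact is that $r_2 < h$ for every $h > 0$: setting $u = H^* - h > 0$ in the relation $H^*(H^*-h) = h$ gives $H^* = u/(1-u)$ and $h = u^2/(1-u)$, so $r_2 = H^* + \ln(u/H^*) = u/(1-u) + \ln(1-u)$, and $r_2 < h$ reduces to $u + \ln(1-u) < 0$, i.e.\ to the classical strict inequality $1 - u < e^{-u}$ for $u > 0$. Consequently the hypothesis in Part (i) forces $r \leq r_2 < h$, while Part (ii) assumes $r < h$ directly.

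With $r < h$ in hand, a universal upper bound follows easily. Since $y_n > h$ for $n \geq 1$, we have $e^{r - y_{n-1}} < e^{r-h} =: \lambda < 1$, so $y_{n+1} < \lambda y_n + h$. Iterating this linear contraction gives $\limsup y_n \leq h/(1 - \lambda) =: M$, and hence for any $\epsilon > 0$ there exists $k$ with $(x_k, x_{k-1}) \in [h, M + \epsilon]^2$. Setting $a = h$ and $b = M + \epsilon$, a direct check gives $F(a, b) = h(1 + e^{r-b}) > a$ and $F(b, a) = (M + \epsilon)\lambda + h = M + \epsilon\lambda < b$, so the box condition $(a,b) \leq_{se} (F(a,b), F(b,a))$ of Proposition \ref{Prop-GlobalStability} is satisfied.

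For Part (i), the hypothesis $r \leq r_2 < h$ places us in case (ii) of Lemma \ref{Lem-Intersections}, so $(F(x,y), F(y,x)) = (x,y)$ has the unique diagonal solution $(\bar y_h, \bar y_h)$ and therefore $G$ has a unique fixed point $\bar\xi_h = (\bar y_h, \bar y_h, \bar y_h, \bar y_h)$. Proposition \ref{Prop-GlobalStability} then yields global attraction to $\bar y_h$, and since $r \leq r_2 < r_1$, local asymptotic stability follows from Lemma \ref{stable-unstable}. Combining these gives global asymptotic stability.

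For Part (ii), Lemma \ref{Lem-Intersections}(iii) supplies three solutions $(x^*, y^*), (\bar y_h, \bar y_h), (y^*, x^*)$ with $x^* < y^*$, and the same proposition yields $x^* \leq \liminf x_n \leq \limsup x_n \leq y^*$. Forward invariance of $[x^*, y^*]^2$ under $T$ is immediate from the pseudo-fixed-point identities $F(x^*, y^*) = x^*$ and $F(y^*, x^*) = y^*$ together with $F(\uparrow, \downarrow)$: for any $(x, y) \in [x^*, y^*]^2$, monotonicity gives $x^* = F(x^*, y^*) \leq F(x, y) \leq F(y^*, x^*) = y^*$. These two ingredients together establish the absorbing property. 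The principal technical step is the a priori bound $r_2 < h$; once this is in hand, the remainder of the proof follows mechanically from the embedding framework of Section 2.
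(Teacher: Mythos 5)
Your proof is correct and follows the same overall skeleton as the paper's (uniqueness or tripleness of the intersections of $L_1$ and $L_2$ via Lemma \ref{Lem-Intersections}, then Proposition \ref{Prop-GlobalStability} applied inside a suitable box), but you make explicit two steps that the paper only asserts or illustrates graphically, and both are worth keeping. First, your computation that $r_2<h$ for every $h>0$ (via $u=H^*-h$, $h=u^2/(1-u)$, $r_2=\frac{u}{1-u}+\ln(1-u)$, reducing to $1-u<e^{-u}$) is correct and is genuinely needed: the paper records only $r_2<r_1$ in \eqref{r1r2h} and lets Fig.~\ref{Fig-StabilityRegions1} carry the claim that the curve $r=r_2$ lies below $r=h$, yet without $r_2<h$ the hypothesis $r\leq r_2$ in Part (i) would not guarantee $h>r$, which is exactly the regime (Remark \ref{feasible}) in which the feasible region for $(a,b)\leq_{se}(F(a,b),F(b,a))$ is nonempty. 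Second, where the paper appeals to Fig.~\ref{Fig-Intersections1}(ii) and Remark \ref{feasible} for the existence of an admissible $(a,b)$ capturing the orbit, you construct one explicitly: the linear bound $y_{n+1}<\lambda y_n+h$ with $\lambda=e^{r-h}<1$ gives $\limsup y_n\leq M=h/(1-\lambda)$, and $a=h$, $b=M+\epsilon$ satisfies $F(a,b)>a$ and $F(b,a)=M+\epsilon\lambda<b$, so the hypotheses of Proposition \ref{Prop-GlobalStability} are verified concretely; your monotonicity check that $[x^*,y^*]^2$ is forward invariant likewise substantiates the word ``absorbing'' in Part (ii). The only caveat, which your write-up shares with the paper's, is the boundary case $r=r_2$: Lemma \ref{Lem-Intersections}(ii) is stated for $r<r_2<h$, and at $r=r_2$ the two curves meet tangentially at $(\bar y_h,\bar y_h)=(H^*,H^*)$ with common slope $-1$, so uniqueness of the intersection there deserves a one-line justification rather than a direct citation of case (ii).
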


\begin{proof}
(i) Since $r\leq r_{2},$ Lemma \ref{Lem-Intersections} guarantees a unique intersection between $L_1$ and $L_2,$ and consequently, the embedding $G$ has a unique fixed point, which we denoted by $\bar \xi_h = (\bar y_h, \bar y_h,\bar y_h, \bar y_h)$. Since $r_2<r_1$, $\bar y_h$ is LAS by Lemma \ref{stable-unstable}.  To show global stability, notice that the inequalities $(a,b)\leq_{se} (F(a,b),F(b,a))$ and $a<b$ have a feasible region (as illustrated in Part (ii) of Fig. \ref{Fig-Intersections1} and Remark \ref{feasible}). For any initial condition $(x_0,x_{-1})$ of Eq. (\ref{Eq-Ricker3}), there is $(a,b)$ in the feasible region such that Proposition \ref{Prop-GlobalStability} is applicable. This implies that for all $\xi_0=(y_0,y_{-1},y_0,y_{-1})$  such that
\begin{equation}\label{initial}\xi=(a,b,b,a)\leq_{se}\xi_0\leq_{se} (b,a,a,b)=\xi^t,
\end{equation}
$G^n(\xi_0)$ converges to $\bar \xi_h$ and Eq. (\ref{Eq-Ricker3}) with the initial condition $(y_0,y_{-1})$ converges to $\bar y_h.$ This verifies Part (i).
Part (ii) is similar and refers to Part (iii) of Lemma \ref{Lem-Intersections} to obtain the three fixed points of $G$ denoted by $\bar \xi_1,\bar \xi_2$ and $\bar \xi_h.$
We have the same scenario as in Condition \eqref{initial} for initial conditions $(x_0,x_{-1})$ and the existence of $(a,b)$ in the feasible region as above so that $G^n(\xi_0)$ must converge to a fixed point of $G,$ and by the embedding result, the orbit of Eq. (\ref{Eq-Ricker3}) that starts at $(y_0,y_{-1})$ will be eventually squeezed between $x^*$ and $y^*$ (Proposition \ref{Prop-GlobalStability}). This completes the proof.
\end{proof}
It is worth stressing that  $r_2<r_1$ always, and $0<r<r_1$ guarantees the local stability of $\bar y_h.$ However, when $r_2<r<\min\{h,r_1\}$, we obtain the pseudo fixed points, which cripple the embedding technique. Nevertheless, an absorbing region was obtained as given in Part (ii) of Theorem \ref{Th-GlobalStabilit1}. Finally, when $h<r<r_1,$ $\bar y_h$ is LAS. However, the absence of a feasible solution for the system $(x,y)\leq_{se} (F(x,y),F(y,x))$ and $x\leq y$ led to the unsuccessful outcome of our embedding approach for addressing global stability. Figure \ref{Fig-StabilityRegions1} illustrates and summarizes our stability results in the parameter space.

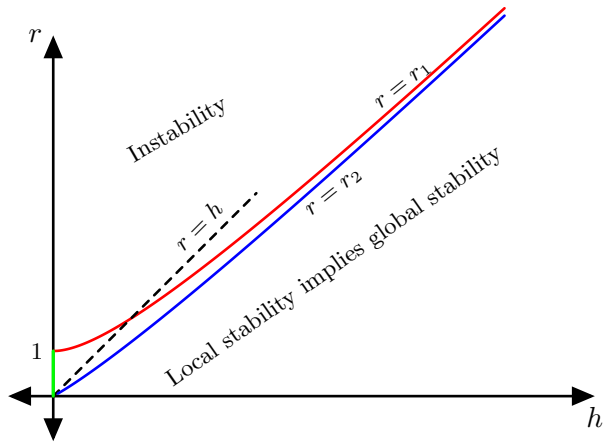
\begin{figure}[H]
\begin{center}
\begin{tikzpicture}[line cap=round,line join=round,>=triangle 45,x=1.0cm,y=1.0cm,scale=0.6]
%\draw[thin,color=gray,step=1.0cm, dashed,scale=1.0] (0,0) grid (7.,7.);
\draw[-triangle 45, line width=1.0pt,scale=1] (0,0) -- (12.0,0) node[below] {$h$};
\draw[line width=1.0pt,-triangle 45] (0,0) -- (-1,0);
\draw[-triangle 45, line width=1.0pt,scale=1] (0,0) -- (0,8) node[left] {$r$};
\draw[line width=1.0pt,-triangle 45] (0,0) -- (0.0,-1);
\draw[line width=1pt,color=red,smooth,samples=100,domain=0:10.0] plot(\x,{\x+1-ln(\x+1)});
\draw[line width=1pt,color=black,dashed,domain=0:4.5] plot(\x,{\x});
\draw[line width=1pt,color=blue,smooth,samples=100,domain=0.1:10.0] plot(\x,{ln((\x*\x+4*\x)^0.5-\x)-ln(\x+(\x*\x+4*\x)^0.5)+\x/2+(\x*\x+4*\x)^0.5/2});
\draw[line width=1.2pt,color=green] (0,0)--(0,1);
\draw[scale=1] (6.5,2.5) node[above,rotate=35] {\footnotesize Local stability implies global stability};
\draw[scale=1] (8.0,6.6) node[above,rotate=39] {\footnotesize $r=r_1$};
\draw[scale=1] (6,4.8) node[below,rotate=39] {\footnotesize $r=r_2$};
\draw[scale=1] (0,1) node[left,rotate=0] {\footnotesize $1$};
\draw[scale=1] (3.5,3.5) node[above,rotate=45] {\footnotesize $r=h$};
\draw[scale=1] (3,5.5) node[above,rotate=30] {\footnotesize Instability};
\end{tikzpicture}
\end{center}
\caption{This figure shows the stability regions in the $(h,r)-$plane. The red curve is $r=r_1,$ which represents the solution of $\bar y_h=1+h,$ while the blue curve is $r=r_2,$ which represents the solution of $H^*=H^*f(H^*)+h.$ We have local stability in the region below $r=r_1$, but the embedding technique fails to address global stability when $r_2<r<h$. However, we conjecture that we also obtain global stability when $r_2<r<r_1$. Observe that global stability in the green part of the $r$-axis, i.e., $h=0$ and $0<r<1$ has been addressed in \cite{Ba-Ga-Kr2013}.}\label{Fig-StabilityRegions1}
\end{figure}

\section{Stability under periodic stocking}
In this section, we consider the Ricker model with delay and $p$-periodic stocking as given in Eq. (\ref{Eq-Ricker4}), i.e., the minimal period of the stocking sequence is $p$. Recall that we give ourselves the liberty to use $f(x)$ instead of $exp(r-x)$ for writing conveniences. Define the sequence of $2$-dimensional maps $\{T_j\},$
$$T_j\;:\; \mathbb{R}^2_+\to \mathbb{R}^2_+\quad \text{ as}\quad T_j(x,y)=(xf(y)+h_j,x).$$
The equation $X_{n+1}=T_{n\bmod p}(X_n)$ is just a vector form of Eq. (\ref{Eq-Ricker4}).
Since
\begin{align*}
y_{n+2}=&y_nf(y_n)f(y_{n-1})+h_0f(y_n)+h_1\\
\leq& (e^r+h_0)e^r+h_1,
\end{align*}
the composition operator $\widetilde{T}:=T_{p-1}\circ T_{p-2}\circ \cdots \circ T_0$ maps a nonempty convex compact set into itself. Therefore, $\widetilde{T}$ has a fixed point in that domain. Because $T_i(x,y)=T_j(x,y)$ if and only if $i=j,$ then a fixed point of  $\widetilde{T}$ must form a $p$-periodic solution of $X_{n+1}=T_{n\bmod p}(X_n),$ which reflects a $p$-periodic solution of Eq. (\ref{Eq-Ricker4}). We are interested in the global stability of the obtained $p$-periodic solution, and we focus on the case $p=2.$
The existing $2$-periodic solution is in fact the $2$-cycle of the system $[F_0,F_1]$, and we denote it by $\{\bar z_0=\bar z_0(h_0,h_1),\bar z_1=\bar z_1(h_0,h_1)\},$
where
\begin{equation}\label{Eq-2Cycle}
\bar z_0=\bar z_1f(\bar z_0)+h_1\quad \text{and}\quad \bar z_1=\bar z_0f(\bar z_1)+h_0.
\end{equation}
Observe that $\bar z_0>h_1$ and $\bar z_1>h_0.$ Furthermore,  $\bar z_0$ and $\bar z_1$ contribute to the formation of two fixed points of $\widetilde{T}=T_1\circ T_0,$  namely $\bar X_0:=(\bar z_0,\bar z_1)$ and $\bar X_1:=(\bar z_1,\bar z_0).$ Now, we give the following fact:

\begin{proposition}\label{Pr-h0>h1}
$h_0>h_1$ if and only if $\bar z_1>\bar z_0.$
\end{proposition}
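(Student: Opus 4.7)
The plan is to exploit the symmetric pair of equations in \eqref{Eq-2Cycle} by subtraction, then use the strict monotonicity of $f(y)=e^{r-y}$ to compare the two sides.

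First I would subtract the second equation of \eqref{Eq-2Cycle} from the first to obtain
\begin{equation*}
\bar z_0 - \bar z_1 \;=\; \bar z_1 f(\bar z_0) - \bar z_0 f(\bar z_1) + (h_1 - h_0),
\end{equation*}
or equivalently
\begin{equation*}
(\bar z_1 - \bar z_0) + \bigl(\bar z_0 f(\bar z_1) - \bar z_1 f(\bar z_0)\bigr) \;=\; h_0 - h_1.
\end{equation*}
This identity will be the workhorse for both directions of the equivalence.

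For the forward direction ($\bar z_1 > \bar z_0 \Rightarrow h_0 > h_1$): because $f$ is strictly decreasing and $\bar z_0,\bar z_1>0$, the assumption $\bar z_1>\bar z_0$ yields $f(\bar z_1)<f(\bar z_0)$, hence
\begin{equation*}
\bar z_0 f(\bar z_1) \;<\; \bar z_1 f(\bar z_1) \;<\; \bar z_1 f(\bar z_0),
\end{equation*}
so the bracketed term is negative. The displayed identity then gives $h_0 - h_1 > \bar z_1-\bar z_0 > 0$, which is even a quantitative version of the desired conclusion.

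For the converse ($h_0 > h_1 \Rightarrow \bar z_1 > \bar z_0$), I would argue by exclusion of the other cases. The case $\bar z_0 = \bar z_1$ is immediately ruled out because substituting back into \eqref{Eq-2Cycle} and subtracting gives $h_0 = h_1$. The case $\bar z_0 > \bar z_1$ is excluded by the symmetric (relabeled) version of the forward direction, which would force $h_1 > h_0$. Since there is nothing subtle here — the system \eqref{Eq-2Cycle} is manifestly symmetric under swapping $(h_0,\bar z_1) \leftrightarrow (h_1,\bar z_0)$ — the only genuine step is the monotonicity comparison already carried out, and I do not foresee any real obstacle; the entire argument is a short three-line computation plus a symmetry remark.
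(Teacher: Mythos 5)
Your approach is essentially the paper's: subtract the two equations in \eqref{Eq-2Cycle} and exploit the strict monotonicity of $f$. The paper merely packages the computation differently, dividing by $\bar z_1-\bar z_0$ and showing that the quotient
$\frac{h_0-h_1}{\bar z_1-\bar z_0}=1+f(\bar z_0)-\bar z_0\frac{f(\bar z_1)-f(\bar z_0)}{\bar z_1-\bar z_0}$
is positive, which delivers both directions of the equivalence in one stroke; your one-direction-plus-symmetry organization is an equally legitimate way to finish.

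There is, however, a sign error in your ``equivalently'' display which, read literally, breaks the forward direction. Subtracting the second equation of \eqref{Eq-2Cycle} from the first and rearranging gives
\[
(\bar z_1-\bar z_0)-\bigl(\bar z_0 f(\bar z_1)-\bar z_1 f(\bar z_0)\bigr)=h_0-h_1,
\]
with a \emph{minus} sign in front of the bracket, not the plus sign you wrote. With your version of the identity, the negativity of the bracket (which you establish correctly via $\bar z_0 f(\bar z_1)<\bar z_1 f(\bar z_1)<\bar z_1 f(\bar z_0)$) would only give $h_0-h_1<\bar z_1-\bar z_0$, from which nothing about the sign of $h_0-h_1$ follows. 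It is only with the corrected minus sign that you obtain $h_0-h_1>\bar z_1-\bar z_0>0$, the quantitative bound you assert. Once this sign is fixed the forward direction is sound, and the remainder---the exclusion of $\bar z_0=\bar z_1$ and the appeal to the symmetry of \eqref{Eq-2Cycle} under $(h_0,\bar z_1)\leftrightarrow(h_1,\bar z_0)$---is correct as written.
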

\begin{proof}
From  Eqs. (\ref{Eq-2Cycle}), we obtain
$$\bar z_1-\bar z_0=\bar z_0(f(\bar z_1))-\bar z_1f(\bar z_0)+h_0-h_1.$$
This gives us
$$\frac{h_0-h_1}{\bar z_1-\bar z_0}=1-\bar z_0\frac{f(\bar z_1)-f(\bar z_0)}{\bar z_1-\bar z_0}+f(\bar z_0)>0.$$
Hence,   $sign(h_0-h_1)=sign(\bar z_1-\bar z_0),$ which completes the proof.
\end{proof}
\begin{remark}
Since $h_0<h_1$ and $h_1<h_0$ give similar behaviour, we focus on the case $h_1<h_0.$ In this case, we have $h_1<\bar z_0< \bar z_1.$
\end{remark}
 To investigate the stability of the $2$-cycle, it is enough to investigate the stability of the fixed point $\bar X_0=(\bar z_0,\bar z_1)$ under $\widetilde{T}=T_1\circ T_0.$ Since
$$\widetilde{T}(x,y)=\left[
           \begin{array}{c}
             (xf(y)+h_0)f(x)+h_1 \\
             xf(y)+h_0 \\
           \end{array}
         \right],$$
the Jacobian matrix at $\bar X_0$ is
$$J(\bar X_0)=\left[
                \begin{array}{cc}
                  \bar z_1f^\prime(\bar z_0)+f(\bar z_0)f(\bar z_1) & \bar z_0f(\bar z_0)f^\prime(\bar z_1) \\
                  f(\bar z_1) & \bar z_0f^\prime(\bar z_1) \\
                \end{array}
              \right].$$
The trace and determinant here are given by
$$Tr=\bar z_1f^\prime(\bar z_0)+\bar z_0f^\prime(\bar z_1)+f(\bar z_0)f(\bar z_1)\quad \text{and}\quad Det=\bar z_0\bar z_1 f^\prime(\bar z_1)f^\prime(\bar z_0).$$
We re-write the two expressions as
\begin{align}
Tr=&(h_1-\bar z_0)+(h_0-\bar z_1)+\left(1-\frac{h_0}{\bar z_1}\right)\left(1-\frac{h_1}{\bar z_0}\right), \label{Eq-T2}\\
Det=& (\bar z_1-h_0)(\bar z_0-h_1).  \label{Eq-D2}
\end{align}
Obviously, $Det>0$ and, $Det<1$ when $(\bar z_1-h_0)(\bar z_0-h_1)<1.$ It turns out that this condition is sufficient for the local stability of the $2$-cycle as the following result shows.

\begin{lemma} \label{Lem-LAS-2Cycle}
Let $h_0\neq h_1$ and both are non-negative. The $2$-cycle of $[F_0,F_1]$ (i.e., $\{\bar z_0,\bar z_1\}$) is LAS stable if
$(\bar z_1-h_0)(\bar z_0-h_1)<1.$% \quad \text{and}\quad \bar z_1\left(1-\frac{1}{\bar z_1-h_0}\right)\bar z_0\left(1-\frac{1}{\bar z_0-h_1}\right)>-1.$$
\end{lemma}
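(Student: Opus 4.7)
The strategy is to invoke the Jury stability criterion for the $2\times 2$ Jacobian $J(\bar X_0)$ of $\widetilde T = T_1\circ T_0$: the fixed point $\bar X_0 = (\bar z_0,\bar z_1)$ (equivalently, the 2-cycle $\{\bar z_0,\bar z_1\}$ of $[F_0,F_1]$) is LAS iff
\[
|Det| < 1,\qquad 1 - Tr + Det > 0,\qquad 1 + Tr + Det > 0.
\]
Introducing $a := \bar z_1 - h_0 > 0$ and $b := \bar z_0 - h_1 > 0$, the identity $f'(z) = -f(z)$ together with the two-cycle relations $\bar z_1 f(\bar z_0) = b$ and $\bar z_0 f(\bar z_1) = a$ collapses (\ref{Eq-T2})--(\ref{Eq-D2}) to
\[
Tr = -(a+b) + \frac{ab}{\bar z_0\bar z_1},\qquad Det = ab.
\]
The hypothesis $(\bar z_1-h_0)(\bar z_0-h_1)<1$ is then precisely $|Det| = ab < 1$.

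The condition $1 - Tr + Det > 0$ is routine: because $h_0,h_1\geq 0$ forces $\bar z_0 \geq b$ and $\bar z_1 \geq a$, we have $ab/(\bar z_0\bar z_1) \leq 1$, and so
\[
1 - Tr + Det \;=\; 1 + (a+b) + ab - \frac{ab}{\bar z_0\bar z_1}\;\geq\; (a+b) + ab \;>\; 0.
\]

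The main obstacle is the remaining condition $1 + Tr + Det > 0$, which rewrites as $(1-a)(1-b) + ab/(\bar z_0\bar z_1) > 0$. When $a\leq 1$ and $b\leq 1$ (the alternative that both exceed $1$ is ruled out by $ab<1$), both summands are nonnegative and positivity is immediate. The hard case is when exactly one of $a,b$ exceeds $1$, so that $(1-a)(1-b)<0$; here I would exploit the explicit identity $ab/(\bar z_0\bar z_1) = e^{2r - \bar z_0 - \bar z_1}$ (obtained by multiplying the two 2-cycle identities) together with the linking constraints $\bar z_0 = a\,e^{\bar z_1 - r}$ and $\bar z_1 = b\,e^{\bar z_0 - r}$ to show that the exponential term dominates $(a-1)(1-b)$. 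Equivalently, since a $2\times 2$ matrix with complex-conjugate eigenvalues automatically satisfies $1+Tr+Det>0$, it suffices to exclude a real eigenvalue $\leq -1$ within the region $\{ab<1\}$, i.e., to rule out a competing period-doubling of the 2-cycle, consistent with the paper's claim that destabilization proceeds via Neimark--Sacker alone.
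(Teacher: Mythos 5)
Your reduction is correct and matches the paper's: with $a:=\bar z_1-h_0=\bar z_0 f(\bar z_1)>0$ and $b:=\bar z_0-h_1=\bar z_1 f(\bar z_0)>0$ one indeed has $Det=ab$, $Tr=-(a+b)+ab/(\bar z_0\bar z_1)$, the hypothesis is exactly $0<Det<1$, and your argument for $1-Tr+Det>0$ is sound. The genuine gap is the one you flag yourself: you never actually prove $1+Tr+Det=(a-1)(b-1)+ab/(\bar z_0\bar z_1)>0$ when exactly one of $a,b$ exceeds $1$, and this step cannot be completed, because the inequality (and hence the lemma as stated) fails there. Take $r=1$, $\bar z_0=3$, $\bar z_1=2$ and back-solve from \eqref{Eq-2Cycle}: $h_0=\bar z_1-\bar z_0e^{r-\bar z_1}=2-3e^{-1}\approx 0.896$ and $h_1=\bar z_0-\bar z_1e^{r-\bar z_0}=3-2e^{-2}\approx 2.729$ are nonnegative and distinct, and $(\bar z_1-h_0)(\bar z_0-h_1)=6e^{-3}\approx 0.299<1$, so the hypothesis holds. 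Yet $a=3e^{-1}>1>2e^{-2}=b$, and from \eqref{Eq-T2}--\eqref{Eq-D2} one computes $1+Tr+Det=1-3e^{-1}-2e^{-2}+7e^{-3}\approx-0.026<0$, so the Jacobian of $\widetilde T$ at $(\bar z_0,\bar z_1)$ has a real eigenvalue below $-1$ (numerically $\lambda\approx-1.036$) and the $2$-cycle is unstable. In particular your fallback plan --- ruling out a real eigenvalue $\le-1$ in the region $ab<1$ --- is also doomed: the $2$-cycle can destabilize through a flip, not only through Neimark--Sacker.

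You should not fault yourself for being unable to close this case, because the paper's own proof closes it only via an algebraic slip at precisely this point. In passing from $(\bar z_1-h_0+1)(\bar z_0-h_1+1)>\bigl(1-\frac{h_0}{\bar z_1}\bigr)\bigl(1-\frac{h_1}{\bar z_0}\bigr)$ to the displayed ``equivalent'' inequality, the correct factorization is $\bar z_1-h_0+1=(\bar z_1-h_0)\bigl(1+\frac{1}{\bar z_1-h_0}\bigr)$, with a plus sign; the paper writes a minus sign, and it is this unproved (and, by the example above, generally false) minus-sign inequality $\bar z_0\bar z_1(1-\frac1a)(1-\frac1b)>1$ that is then substituted into the bracket of the final identity for $Det+Tr+1$. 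With the correct sign one only obtains the trivially true $\bar z_0\bar z_1(1+\frac1a)(1+\frac1b)>1$, which says nothing about that bracket. The statement is salvageable under the stronger hypothesis $a\le1$ and $b\le1$, i.e.\ $\bar z_j\le h_{j+1}+1$ for both $j$ (part (ii) of the Corollary), which is exactly your ``easy case''; so your identification of $1+Tr+Det>0$ as the main obstacle is correct, but the obstacle is an error in the lemma, not merely a missing estimate.
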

\begin{proof}
We check the Jury conditions here. We have $0<Det<1.$ It remains to show that
$$Det-Tr>-1\quad \text{and}\quad Det+Tr>-1.$$
The first inequality is satisfied because
\begin{equation}\label{Eq-D-T+1}
Det-Tr+1=(\bar z_1-h_0+1)(\bar z_0-h_1+1)-\left(1-\frac{h_0}{\bar z_1}\right)\left(1-\frac{h_1}{\bar z_0}\right)>0.
\end{equation}
Now, we use this fact to write
$$(\bar z_1-h_0+1)(\bar z_0-h_1+1)>\left(1-\frac{h_0}{\bar z_1}\right)\left(1-\frac{h_1}{\bar z_0}\right),$$
or equivalently,
$$\bar z_1\left(1-\frac{1}{\bar z_1-h_0}\right)\bar z_0\left(1-\frac{1}{\bar z_0-h_1}\right)>1.$$
Next, write
\begin{align*}
Det+Tr+1=&(\bar z_0-h_1-1)(\bar z_1-h_0-1)+\left(1-\frac{h_1}{z_0}\right)\left(1-\frac{h_0}{z_1}\right)\\
=&\left(1-\frac{h_1}{z_0}\right)\left(1-\frac{h_0}{z_1}\right)\left[\bar z_1\left(1-\frac{1}{\bar z_1-h_0}\right)\bar z_0\left(1-\frac{1}{\bar z_0-h_1}\right)+1\right],
\end{align*}
which is positive. This completes the proof.
\end{proof}
Note that the condition $(\bar z_1-h_0)(\bar z_0-h_1)<1$ can be replaced by the more simple (but unnecessary) condition $r\leq 1.$ Also, the simple conditions $\bar z_j \leq h_{j+1}+1$ for $j=0,1$ are sufficient to make $Det<1.$  For the reader's convenience, we summarize some special cases in the following corollary:
\begin{corollary}
Each of the following cases is valid:
\begin{description}
\item{(i)} If $r\leq 1,$ then $Det<1.$
\item{(ii)} If $\bar z_j \leq h_{j+1}+1$ for all $j=0,1,$ then the $2$-cycle is LAS.
\item{(iii)} If $\bar z_j > h_{j+1}+1$ for all $j=0,1,$ then the $2$-cycle is unstable.
\end{description}
\end{corollary}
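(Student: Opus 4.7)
The plan is to address each of the three cases by computing $\mathrm{Det} = (\bar z_1 - h_0)(\bar z_0 - h_1)$ from Eq.~(\ref{Eq-D2}) combined with the $2$-cycle relations in Eq.~(\ref{Eq-2Cycle}); Lemma~\ref{Lem-LAS-2Cycle} will do the heavy lifting once $\mathrm{Det}<1$ is established for (i) and (ii), while (iii) follows from $\mathrm{Det}>1$ and the fact that $\mathrm{Det}$ is the product of the Jacobian's eigenvalues.

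For Part~(i), I would first substitute $\bar z_0 - h_1 = \bar z_1 e^{r - \bar z_0}$ and $\bar z_1 - h_0 = \bar z_0 e^{r - \bar z_1}$ (read off directly from Eq.~(\ref{Eq-2Cycle})) into Eq.~(\ref{Eq-D2}) to obtain the compact formula
\begin{equation*}
\mathrm{Det} \;=\; \bar z_0\, \bar z_1\, e^{2r - \bar z_0 - \bar z_1}.
\end{equation*}
Then $\mathrm{Det}<1$ reduces, upon taking logarithms, to the inequality $2r < (\bar z_0 - \ln \bar z_0) + (\bar z_1 - \ln \bar z_1)$. The elementary bound $t - \ln t \geq 1$ for $t>0$ (with equality only at $t=1$) makes the right-hand side at least $2$. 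Since $h_0 \neq h_1$ forces $\bar z_0 \neq \bar z_1$ by Proposition~\ref{Pr-h0>h1}, at least one summand strictly exceeds $1$, so the sum exceeds $2$; combined with $r \leq 1$ this yields $\mathrm{Det}<1$.

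For Part~(ii), the hypothesis translates to $\bar z_0 - h_1 \leq 1$ and $\bar z_1 - h_0 \leq 1$, which immediately gives $\mathrm{Det} \leq 1$. The main obstacle is to rule out the boundary case $\mathrm{Det}=1$, since Lemma~\ref{Lem-LAS-2Cycle} requires strict inequality. I would handle this by assuming both factors equal $1$ and using the identities of Part~(i) to derive $\bar z_1 = e^{\bar z_0 - r}$ and $\bar z_0 = e^{\bar z_1 - r}$; dividing these gives $\bar z_1 e^{\bar z_1} = \bar z_0 e^{\bar z_0}$, and strict monotonicity of $t \mapsto t e^{t}$ on $(0,\infty)$ then forces $\bar z_0 = \bar z_1$, contradicting Proposition~\ref{Pr-h0>h1}. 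Hence $\mathrm{Det}<1$ strictly, and Lemma~\ref{Lem-LAS-2Cycle} delivers LAS.

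Part~(iii) is immediate: $\bar z_j > h_{j+1}+1$ for $j=0,1$ gives $\mathrm{Det}>1$, so the product of the two eigenvalues of the Jacobian has modulus exceeding one, which forces at least one eigenvalue to lie outside the unit disk. I expect the equality analysis in Part~(ii) to be the only step requiring genuine care; the remaining parts are direct algebraic consequences of the $2$-cycle identities combined with the Jury-type criterion already encoded in Lemma~\ref{Lem-LAS-2Cycle}.
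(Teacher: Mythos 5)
Your proof is correct and follows essentially the same route the paper intends: the corollary is stated there without proof as a direct consequence of the determinant formula $Det=(\bar z_1-h_0)(\bar z_0-h_1)=\bar z_0\bar z_1e^{2r-\bar z_0-\bar z_1}$ together with Lemma \ref{Lem-LAS-2Cycle} for (ii) and the eigenvalue-product argument for (iii). Your explicit treatment of the boundary case $Det=1$ in Part (ii), ruled out via $\bar z_1e^{\bar z_1}=\bar z_0e^{\bar z_0}$ and Proposition \ref{Pr-h0>h1}, is a detail the paper glosses over and is a genuine (and correct) improvement in rigor.
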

Now, we provide some examples that illustrate the validity of Lemma \ref{Lem-LAS-2Cycle}.
\begin{example}\label{Ex-PeriodDoubling}
Consider Eq. (\ref{Eq-Ricker4}) in each one of the following cases:
\begin{description}
\item{(i)} Let $h_0=2.0, h_1=6.444$ and $r=3.0.$ We obtain $\bar z_0\approx 6.635$ and $\bar z_1\approx 7.237.$ In this case, the condition of Lemma \ref{Lem-LAS-2Cycle} is not satisfied. Furthermore, $Det\approx 1.000$ and $Tr\approx -5.407.$ This makes $Det+Tr+1<0,$ and consquently, the $2$-cycle $\{\bar z_0,\bar z_1\}$ of the system $[F_0,F_1]$ is unstable. In fact, we obtain the $4$-cycle
$$\{7.049,\; 19.611,\; 6.479,\; 2.000\}.$$
\item{(ii)} Let $r=2.0,h_0=2.156$ and $h_1=2.720.$ We obtain $\bar z_0\approx 3.462$ and $\bar z_1\approx 3.199.$ In this case, Lemma \ref{Lem-LAS-2Cycle} is satisfied, and the $2$-cycle $\{\bar z_0,\bar z_1\}$ of the system $[F_0,F_1]$ is LAS. Furthermore, $Det\approx 0.774,$ $Tr\approx -1.715$ and $Det+Tr+1\approx 0.059>0,$ and consquenlty, the eigenvalues are $\approx -0.858 \pm 0.196i,$ which are within the unit disk.
\item{(iii)} Let $h_0=0.820$ and $h_1=1.800.$ We let $r=1.5$ to obtain $\bar z_0\approx 2.552$ and $\bar z_1\approx 2.151.$ In this case, the condition of Lemma \ref{Lem-LAS-2Cycle} is not satisfied. However, by considering $r$ as a bifurcation parameter, a Neimark-Sacker bifurcation occurs near $r=1.5.$ Indeed, the $2$-cycle bifurcates into two invariant curves that serve as one attractor.  Fig. \ref{Fig-NeimarkSacker} illustrates this scenario.
\end{description}
\end{example}
Based on Lemma \ref{Lem-LAS-2Cycle}, The $2$-cycle of Eq. (\ref{Eq-Ricker4}) has the potential to lose its stability after going through a Neimark-Sacker bifurcation. The period-doubling observed in Part (i) of Example \ref{Ex-PeriodDoubling} comes after the $2$-cycle exits the stability region through the Neimark-Sacker bifurcations. Figure \ref{Fig-NeimarkSacker} Shows an illustrative case of the Neimark-Sacker bifurcation that occurs.

\begin{figure}[htpb]
    \centering
    \includegraphics[width=0.6\textwidth]{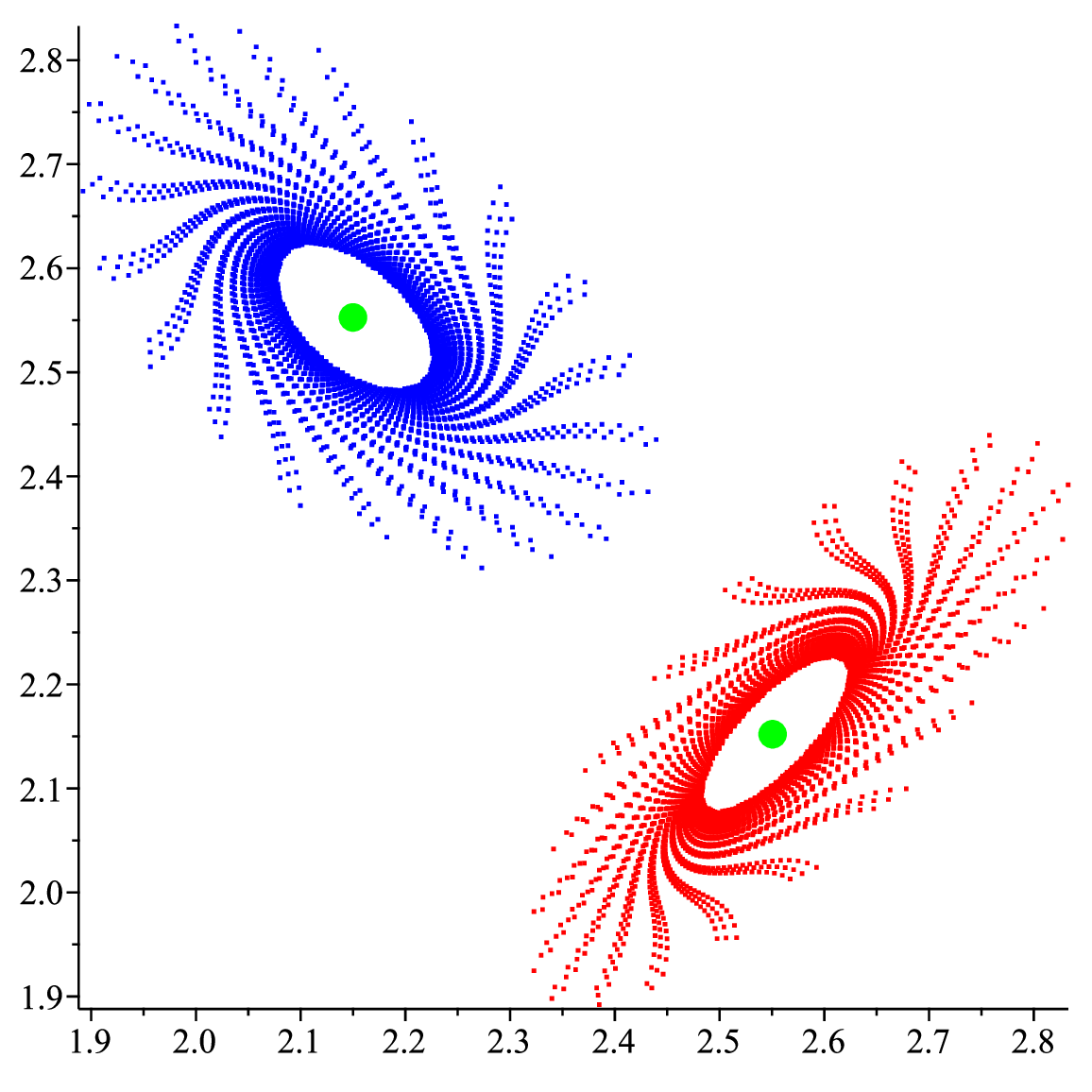}
    \caption{Let $h_0=0.820$ and $h_1=1.800.$ When $r=1.5,$ the $2$-cycle bifurcates into two curves through a Neimark-Sacker bifurcation. The two curves serve as one attractor. The red represents the even terms in the orbit, while the blue represents the odd terms. The green solid circles represent the $2$-cycle. The horizontal axis is for $x_{n}$, and the vertical axis is for $x_{n-1}.$  }
    \label{Fig-NeimarkSacker}
\end{figure}

Next, we proceed to find conditions under which we obtain global stability.  Recall $F_j(x,y)=xf(y)+h_j$ for $j=0,1.$ First, we investigate the existence of pseudo $2$-cycles. Solving the equation $G_1(G_0(\xi))=\xi,$ where $\xi=(x,y,u,v)$ gives us (see Subsection 2.3)
$$(x,y)=(F_1(v,u),F_0(u,v))\quad \text{and}\quad (u,v)=(F_1(y,x),F_0(x,y)).$$
Since each value of $(x,y)$ determines a unique value of $(u,v),$ it is enough to focus on the equations
\begin{equation} \label{Eq-artificialCycles}
\begin{cases}
x-h_1=& (xf(y)+h_0)f(yf(x)+h_1)\\
y-h_0=&(yf(x)+h_1)f(xf(y)+h_0).
\end{cases}
\end{equation}
Observe that the $2$-cycle $\{\bar z_0,\bar z_1\}$  (see Eq. (\ref{Eq-2Cycle})) satisfies System (\ref{Eq-artificialCycles}). For other solutions, our best bargain here is to investigate the various scenarios based on the curves of the two equations. Let $\ell_1$ and $\ell_2$ be the curves of the first and second equations in System \ref{Eq-artificialCycles}, respectively. The $\ell_1$-curve has a horizontal asymptote at $y=2r-h_1$ and a vertical asymptote at $x=h_1.$ Similarly, the $\ell_2$-curve has a horizontal asymptote at $y=h_0$ and a vertical asymptote at $x=2r-h_0.$ Multiple intersections between $\ell_1$ and $\ell_2$ guarantee the existence of artificial cycles, and in this case, the embedding technique fails to help us establish global stability. Thus, a unique intersection between $\ell_1$ and $\ell_2$ is crucial to our embedding strategy. Next, we appeal to the results of Subsection \ref{SubSection-2.3} and Proposition \ref{Pr-GSConditions} in particular. Our primary objective is to identify values for $(a,b)$ such that  $a<b$ and the condition \eqref{In-ConditionsOnAandB} of Proposition \ref{Pr-GSConditions} are satisfied.
By Part (ii) and Part (iii) of Lemma \ref{Lem-Intersections}, and the consequent Remark \ref{feasible},  the inequalities $(a,b)<_{se} (F_0(a,b),F_0(b,a))$ have a feasible region when $h_0>r.$ However, we need the feasible region to overlap with the feasible region of the inequalities
\begin{equation}\label{In-FeasibleRegionPeriodic2}
a\leq  F_1(F_0(a,b),b)\quad\text{and}\quad  b\geq F_1(F_0(b,a),a).
\end{equation}
 A quick observation here is that if $(a,b)<_{se} (F_j(a,b),F_j(b,a))$ for each $j,$ then the inequalities in \eqref{In-FeasibleRegionPeriodic2} have a feasible region; however, we can avoid this strong constraint.  Before we proceed, we give some illustrative computational examples.

\begin{example}\label{Ex-LastExample}
\begin{description}
\item{(i)}
Consider $r=1.0,h_0=2.0$ and $h_1=1.5.$ The $2$-cycle is $\{\bar z_0,\bar z_1\}\approx \{2.230,2.498\}$. From Eqs. (\ref{Eq-T2}) and (\ref{Eq-D2}), we obtain $Tr\approx -1.163,$ and $Det\approx 0.364.$ The eigenvalues are $\approx -0.582 \pm 0.161i,$ which are within the unit disk. Thus, the $2$-cycle $\{\bar z_0,\bar z_1\} $ of the system $[F_0,F_1]$ is locally asymptotically stable. The curves $\ell_1$ and $\ell_2$ are given in Part (i) of Fig. \ref{Fig-Intersections3}.
\item{(ii)} When $r=1.0,h_0=2.0$ and $h_1=1.0,$ we obtain the $2$-cycle $[\bar z_0,\bar z_1]\approx [2.455,1.950]$ of the system $[F_0,F_1].$ The trace and determinant from Eqs. (\ref{Eq-T2}) and
 (\ref{Eq-D2}) are given by $Tr\approx -1.42$ and $Det\approx -0.0732.$
The eigenvalues are $\approx -1.470$ and   $0.050.$ Obviously, the $2$-cycle of the system $[F_0,F_1]$ is unstable. Next, we investigate the solution of the system $G_1(G_0(\xi))=\xi.$ As clarified in Subsection \ref{SubSection-2.3}, we explore the following scenarios:
\begin{itemize}
\item $G_1(\xi)=G_0(\xi)=\xi,$ where $\xi=(x,y,y,x).$ $x=y$ leads to an equilibrium solution of Eq. (\ref{Eq-Ricker4}), while $x\neq y$ leads to an artificial equilibrium solution. Both scenarios are not possible since $h_0\neq h_1.$

\item $\xi_0=(x,x,y,y), x\neq y, G_0(\xi_0)=\xi_1=(y,y,x,x)$ and $G_1(\xi_1)=\xi_0$. This means $\{x,y\}$ is a $2$-cycle for each one of the  one-dimensional maps $f_j(x)=F_j(x,x),j=0,1.$ Again, this is not possible since $h_0\neq h_1.$

\item $\xi_0=(x,y,x,y), x\neq y, G_0(\xi_0)=\xi_1=(y,x,y,x)$ and $G_1(\xi_1)=\xi_0.$ This means $\{\xi_0,\xi_1\}$ is a $2$-cycle of $[G_0,G_1]$ and $\{x,y\}$ is a $2$-cycle of $[F_0,F_1].$ This is always possible in our system since we verified the existence of a $p$-cycle for the $p$-periodic system $[F_0,F_1,\ldots, F_{p-1}]$. For the specific choice of our parameters, we already computed the $2$-cycle above.

\item  $\bar \xi_0=(\bar x,\bar y,\bar u,\bar v), \bar x\neq \bar y, (\bar x,\bar y)\neq (\bar u,\bar v), G_0(\bar \xi_0)=\bar \xi_1=(\bar v,\bar u,\bar y,\bar x)$ and $G_1(\bar \xi_1)=\bar \xi_0.$  This scenario is possible under certain choices of the parameters, and it leads to what we call artificial cycles. Indeed, when $r=1.0,h_0=2.0,$  and $h_1=1.0,$ we obtain the following: The fixed points of $G_{10}=G_1\circ G_0$ are $\bar \xi_0=(\bar x,\bar y,\bar u,\bar v)$ and $\bar \xi_1=(\bar u,\bar v,\bar x,\bar y),$ where
    $$\bar x\approx 1.109,\; \bar y\approx 3.306,\; \bar u\approx 3.966 \quad \text{and}\quad \bar v\approx 2.110.$$
    On the other hand, the fixed points of $G_{01}=G_0\circ G_1$ are $\bar \eta_0=(\bar v,\bar u,\bar y,\bar x)$ and $\bar \eta_1=(\bar y,\bar x,\bar v,\bar u).$ Therefore, $[G_0,G_1]$ has two $2$-cycles that are not generated by $2$-cycles of $[F_0,F_1],$ namely $\{\bar \xi_0,\bar \eta_0\}$ and $\{\bar \xi_1,\bar \eta_1\}.$ Those cycles lead to what we called artificial cycles of $[F_0,F_1],$ namely $\{(\bar x,\bar y),(\bar v,\bar u)\}$ and $\{(\bar u,\bar v),(\bar y,\bar x)\}.$
  In this case,  the embedding technique fails to tackle global stability in the system $[F_0,F_1]$. However, we can eventually squeeze the orbits of $G_{10}$ between $\bar \xi_0$ and $\bar \xi_1$ (with respect to $\leq_{se}$). Similarly, the orbits of $G_{01}$ can be eventually squeezed between $\bar \eta_0$ and $\bar \eta_1$ (again with respect to $\leq_{se}$). When it comes to the system $[F_0,F_1],$ the even terms of the orbits are eventually squeezed between $\bar x$ and $\bar u$, while the odd terms of the orbits are eventually squeezed between $\bar v$ and $\bar y.$
\end{itemize}
\end{description}
\end{example}

%%%%%%%%%%%%%%%%%%%%%%%%%%%%%%%%%%%%%%%%%%%%%%%%%%%%%%%%%%%%%%%%%%%%%%%%%%%%%%%%%%%%%%%%%%%%%%%%%%%%%%%%%%%%%%%%%%%%%%%%%%%%%%%%%%%%%
%%%%%%%%%%%%%%%%%%%%%%%%%%%%%%%%%%%%%%%%%%%%%%%
\definecolor{ffqqqq}{rgb}{1.,0.,0.}
\definecolor{qqqqff}{rgb}{0,0,1}
\definecolor{zzttqq}{rgb}{0.6,0.2,0}
\begin{figure}[h!]
\centering
\begin{minipage}[t]{0.5\textwidth}
\raggedright
\begin{center}
\begin{tikzpicture}[line cap=round,line join=round,>=triangle 45,x=1.0cm,y=1.0cm,scale=0.5]
%\draw[thin,color=gray,step=1.0cm, dashed,scale=1.0] (0,0) grid (7.,7.);
\draw[-triangle 45, line width=1.0pt,scale=1] (0,0) -- (10.0,0) node[below] {$x$};
\draw[line width=1.0pt,-triangle 45] (0,0) -- (-1,0);
\draw[-triangle 45, line width=1.0pt,scale=1] (0,0) -- (0,10) node[left] {$y$};
\draw[line width=1.0pt,-triangle 45] (0,0) -- (0.0,-1);
\draw[line width=1.2pt,domain=0:9.0,smooth,variable=\x,green] plot ({\x}, {\x)});
\fill[line width=2pt,color=zzttqq,fill=zzttqq,fill opacity=0.5] (1.1,9.1) -- (1.15,8.024) -- (1.2,7.435) -- (1.25,6.9389) -- (1.3,5.985)--(1.35,5) -- (1.4,4.55) -- (1.45,4.0) --(1.5,4)-- (1.48,5.5166) -- (1.47,6.5) -- (1.4,7.81) -- (1.38,8.66) -- (1.36,9.1) -- cycle;
\draw [color=red,line width=1pt] plot [smooth] coordinates {(1.5,9) (1.51,7.99) (1.6,4.587) (1.8,3.30) (1.9,3.1) (2.0,2.8) (2.2,2.53) (2.5,2.24) (2.7,2.1) (3.0,1.9) (3.5,1.67) (4.0,1.49) (4.5,1.36) (5,1.24) (6,1.1) (7,0.98) (8,0.91) (9,0.88)};
\draw [color=blue,line width=1pt] plot [smooth] coordinates {(0.3,9) (0.383,8.0) (0.4,7.6) (0.5,6.3) (0.56,5.9) (0.61,5.5) (0.7,5.0) (0.8,4.5) (0.95,4.0) (1.16,3.5) (1.495,3.0) (2.23,2.5) (2.96,2.3) (3.7,2.2) (5.2,2.1) (7,2.05) (8,2.0) (9,2.0)};
\draw[scale=1] (0.7,8.5) node[above] {\footnotesize $\ell_2 $};
\draw[scale=1] (2.0,8.7) node[above] {\footnotesize $\ell_1 $};
\draw[scale=1] (0,1.95) node[left] {\footnotesize $h_0 $};
\draw[scale=1] (0,0.5) node[left] {\footnotesize $2r-h_1 $};
\draw[line width=0.6pt,dashed,color=black] (0,0.7) -- (9.0,0.7);
\draw[line width=0.6pt,dashed,color=black] (0,2.0) -- (9.0,2.0);
\draw[line width=1.0pt,red,fill=yellow] (2.23,2.498) circle (2.5pt);
\draw[scale=1] (5,-0.7) node[below] {\footnotesize (i) $r=1.0, h_0=2.0$ and $h_1=1.5$};
 %%%%%%%%%%%%%%%%%%%%%
%\draw [-,green] (0,0) -- (1,1);
\end{tikzpicture}
\end{center}
\end{minipage}%
%%%%%%%%%%%%%%%%%%%%%%%%%%%%%%%%%%%%%%%%%%%%%%%%%%%%%%%%%%%%%%%%%%%%%%%%%%%%%%%%%%%%%%%%%%%%
\begin{minipage}[t]{0.5\textwidth}
\raggedright
\begin{center}
\begin{tikzpicture}[line cap=round,line join=round,>=triangle 45,x=1.0cm,y=1.0cm,scale=0.5]
\draw[-triangle 45, line width=1.0pt,scale=1] (0,0) -- (10.0,0) node[below] {$x$};
\draw[line width=1.0pt,-triangle 45] (0,0) -- (-1,0);
\draw[-triangle 45, line width=1.0pt,scale=1] (0,0) -- (0,10) node[left] {$y$};
\draw[line width=1.0pt,-triangle 45] (0,0) -- (0.0,-1);
\draw[line width=1.2pt,domain=0:9.0,smooth,variable=\x,green] plot ({\x}, {\x)});
\draw [color=blue,line width=1pt] plot [smooth] coordinates {(0.3056,8.98947) (0.3522,7.96957) (0.411,7.0079) (0.497,6.012) (0.636,4.9526) (0.741,4.410) (0.831,4.04768) (0.951,3.674) (1.15998,3.20998)
(1.1599840471258906,3.2099793976512663) (1.3791304718838568,2.8892190948029763) (1.3791304718838568,2.8892190948029763) (1.671338709363464,2.6171443335007334) (2.0010044568692624,2.4320033389501656) (2.3831995808772213,2.303937560304738) (2.7760996212951765,2.2238154144734823) (3.180090582313232,2.1709020886369235) (3.5803877049466935,2.1352525409215493) (4.321840496662944,2.0929187845360855) (5.002121622623495,2.068475451542712) (6.0008151055953896,2.0452158932656572) (7.000368076532915,2.0304049549940246) (8.000170916398805,2.020626615462483) (9.000073924511108,2.0140656110905697) (10.000035260475121,2.009625702286068)};
\draw [color=red,line width=1pt] plot [smooth] coordinates {(1.000,9.000) (1.000,8.000) (1.0018,7.000) (1.0051,6.000) (1.0146,5.000) (1.0326940851950859,4.270495283700743) (1.0446574616948663,4.002296070964245) (1.0972085776143057,3.386981145737016) (1.1877312432948064,2.952130901598307) (1.383897250779989,2.612664258846495) (1.7199621019707987,2.4697080183734545) (2.060047574872888,2.4545652984651367) (2.4211706001954827,2.4502475889625304) (2.7775730452444725,2.4158399945096334) (3.123,2.3474) (3.4859,2.251) (3.815,2.154) (4.3066,2.01544) (5.015135265619371,1.8464639958687687) (6.017574952187674,1.671892630946028) (6.9786149801073964,1.5560014982018753) (7.996789281377806,1.4683641260959845) (8.957696473551817,1.4071714833970774) (9.999402142750256,1.3564852069214748)};
\draw[scale=1] (2.0,8.5) node[above] {\footnotesize $\ell_1 $};
\draw[scale=1] (9.0,2.0) node[above] {\footnotesize $\ell_2 $};
\draw[scale=1] (0,1) node[left] {\footnotesize $2r-h_1 $};
\draw[scale=1] (0,2) node[left] {\footnotesize $h_0 $};
\draw[line width=0.6pt,dashed,color=black] (0,1.95) -- (9.0,1.95);
\draw[line width=0.6pt,dashed,color=black] (0,1.0) -- (9.5,1.0);
\draw[line width=1.0pt,red,fill=yellow] (1.95,2.455) circle (2.5pt);
\draw[scale=1] (5,-0.7) node[below] {\footnotesize (ii) $r=1.0, h_0=2.0$ and $h_1=1.0$};
\end{tikzpicture}
\end{center}
\end{minipage}%
\caption{Part (i) of this figure shows the curves $\ell_1$ and $\ell_2$ of the first and second equations in System (\ref{Eq-artificialCycles}), respectively. The unique intersection is the $2$-cycle $\{\bar z_0,\bar z_1\}$ of the system $[F_0,F_1],$ where $\bar z_0\approx 2.498$  and $\bar z_1\approx 2.230.$ The shaded thin region represents the points $(a,b)$ that satisfy $(a,b,b,a)<_{se}G_1(G_0(a,b,b,a))$ and $a<b.$  Part (ii) shows the same curves but when $h_1$ is changed to $1.0.$ The three intersections between the curves represent the $2$-cycle $\{\bar z_0\approx 2.455,\bar z_1\approx 1.950\}$ of the system $[F_0,F_1]$ and two other artificial cycles generated from the intersections $(1.109,3.306)$ and $(3.966,2.110)$.  }\label{Fig-Intersections3}
\end{figure}
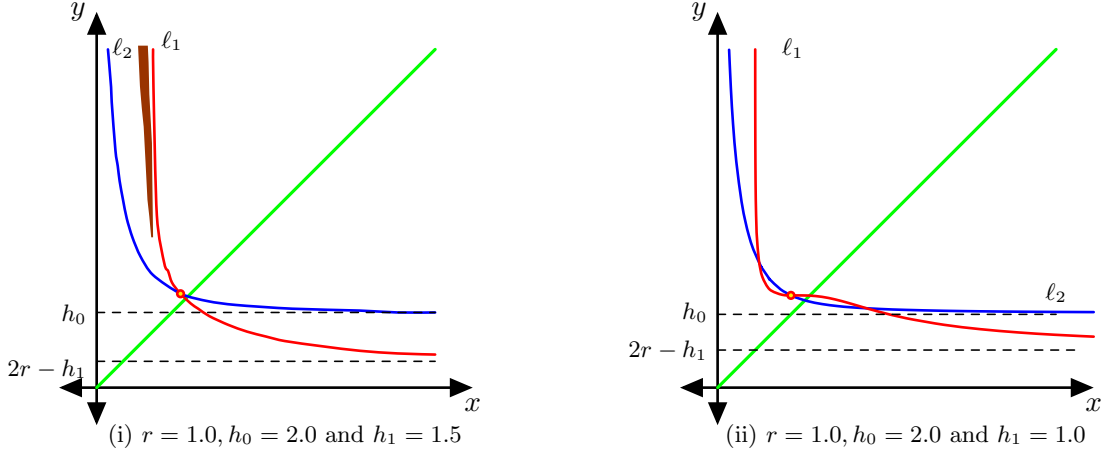
%%%%%%%%%%%%%%%%%%%%%%%%%%%%%%%%%%%%%%%%%%%%%%%%%%%%%%%%%%%%%%%%%%%%%%%%%%%%%%%%%%%%%%%%%%%%%%%%%%%%%%%%%%%%%%%%%%%%%%%%%%%%%%%%%%%%%
 Now, we revisit the system of inequalities in (\ref{In-FeasibleRegionPeriodic2}), and re-write them as
 $$\begin{cases}
 a(1-(f(b)))^2\leq& h_0f(b)+h_1\\
 b(1-(f(a)))^2\leq& h_0f(a)+h_1.
 \end{cases}$$
 For $a,b>r,$ the factors on the left-hand side are positive, which motivate us to define the map $g_2(t)=\frac{h_0f(t)+h_1}{1-(f(t))^2 },\; t>r.$  Also, recall from Remark \ref{feasible} that we defined the map $g_1(t)=\frac{h_0}{1-f(t)}.$ Both $g_1$ and $g_2$ are decreasing in $t.$ The inequalities in (\ref{In-FeasibleRegionPeriodic2}) become $a\leq g_2(b)$ and $b\geq g_2(a).$ It is straightforward to check the following result.
 \begin{lemma}\label{Lem-ConditionsOn(a,b)}
 Assume $h_0>r.$ There exists an unbounded set of points $(a,b),\; a<b$ that satisfy
 $$a\leq g_1(b)<g_2(b)<g_2(a)\quad \text{whenever}\quad h_0<h_1$$
 and
 $$ b\geq g_1(a)>g_2(a)>g_2(b)\quad \text{whenever}\quad h_0>h_1.$$
 \end{lemma}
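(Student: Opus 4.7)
The plan is to reduce the entire statement to a single algebraic identity together with the qualitative behaviour of $g_1$ at infinity. First, I would compute $g_2(t)-g_1(t)$ directly. Using $1-f(t)^2=(1-f(t))(1+f(t))$, a one-line calculation yields
$$g_2(t)-g_1(t)=\frac{h_1-h_0}{1-f(t)^2},\quad t>r.$$
Because $t>r$ forces $0<f(t)<1$, the denominator is positive and the sign of $g_2-g_1$ agrees with the sign of $h_1-h_0$. In particular, $g_1<g_2$ pointwise when $h_0<h_1$, and the reverse holds when $h_0>h_1$.

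Next, both $g_1$ and $g_2$ are strictly decreasing on $(r,\infty)$, so for $r<a<b$ one automatically has $g_2(a)>g_2(b)$ and $g_1(a)>g_1(b)$. Combined with the sign information above, the inner pair of inequalities $g_1(b)<g_2(b)<g_2(a)$ in the case $h_0<h_1$ holds for every such $a,b$, and likewise $g_1(a)>g_2(a)>g_2(b)$ holds in the case $h_0>h_1$. Only the outermost inequality remains to be secured on an unbounded set.

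To do so I would exploit the asymptotics $g_1(t)\to h_0$ as $t\to\infty$ together with the pointwise bound $g_1(t)>h_0$ for all $t>r$ (since $1-f(t)<1$). In the case $h_0<h_1$, the hypothesis $h_0>r$ makes the interval $(r,h_0]$ nonempty; for any $a$ in that interval and any $b>a$, we have $a\leq h_0<g_1(b)$, so the half-strip $\{(a,b):r<a\leq h_0,\ b>a\}$ is an unbounded admissible set. In the case $h_0>h_1$, I would use the fixed-point identity $g_1(\bar y_{h_0})=\bar y_{h_0}$, which follows directly from the defining equation of the equilibrium of Eq.~(\ref{Eq-Ricker3}) with $h=h_0$; for any $a>\bar y_{h_0}$ and $b>a$, monotonicity of $g_1$ gives $g_1(a)<g_1(\bar y_{h_0})=\bar y_{h_0}<a<b$, so $\{(a,b):a>\bar y_{h_0},\ b>a\}$ is unbounded and admissible.

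The only step requiring genuine thought is the algebraic simplification of $g_2-g_1$ in the first display; everything else follows from monotonicity of $g_1,g_2$ and the boundary value $g_1(+\infty)=h_0$. The role of the hypothesis $h_0>r$ is precisely to guarantee that the admissible region is nonempty in each case (the interval $(r,h_0]$ in Case 1, and the half-line $(\bar y_{h_0},\infty)$, which lies inside the domain of $g_1$, in Case 2), so I do not anticipate a substantive obstacle.
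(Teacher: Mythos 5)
Your proof is correct, and since the paper dismisses this lemma with ``it is straightforward to check,'' your identity $g_2(t)-g_1(t)=\frac{h_1-h_0}{1-f(t)^2}$ together with the strict monotonicity of $g_1,g_2$ on $(r,\infty)$ and the bound $g_1(t)>h_0$ is exactly the intended verification. One small remark: in the case $h_0>h_1$ the unbounded family actually invoked later in the proof of Theorem \ref{Th-MainResult} is the one with $a$ small ($r<a<h_1$ and $b= g_1(a)$) rather than your half-strip $a>\bar y_{h_0}$, but your set still proves the lemma as stated, and the inner inequalities $g_1(a)>g_2(a)>g_2(b)$, which you show hold for \emph{all} $r<a<b$, are what that theorem really uses.
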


 Finally, we arrive at the main result of this section, which provides a generalization of Part (i) of Theorem \ref{Th-GlobalStabilit1}.

\begin{theorem}\label{Th-MainResult}
Consider Eq. (\ref{Eq-Ricker4}) with $h_0\neq h_1.$ If both $h_0,h_1>r$ and the solution of System (\ref{Eq-artificialCycles}) is unique, then the $2$-cycle $\{\bar z_0,\bar z_1\}$ is global asymptotically stable.
\end{theorem}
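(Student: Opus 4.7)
The plan is to apply Proposition \ref{Pr-GSConditions} to the $2$-periodic system $[F_0, F_1]$. The structural monotonicity is immediate: since $f(y) = e^{r-y}$ is strictly decreasing, each $F_j(x,y) = xf(y) + h_j$ satisfies $F_j(\uparrow,\downarrow)$, and therefore $G_0, G_1$ are monotone with respect to $\leq_{se}$.

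The first substantive step is to produce a pair $(a,b)$ with $r < a < b$ satisfying condition \eqref{In-ConditionsOnAandB} such that the orbit $(X_n)=(y_n, y_{n-1})$ is eventually squeezed inside the box $[a,b]^2$. The lower bound is immediate from $y_{n+1} > h_n \geq h_{\min} := \min(h_0, h_1) > r$ for all $n \geq 0$. For the upper bound, the assumption $h_0, h_1 > r$ yields $f(y_{n-1}) < e^{r - h_{\min}} =: c < 1$ for $n \geq 2$, so $y_{n+1} < c\,y_n + h_{\max}$ with $h_{\max} := \max(h_0,h_1)$; iterating this linear inequality gives $\limsup_n y_n \leq h_{\max}/(1-c) =: M$. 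By Lemma \ref{Lem-ConditionsOn(a,b)}, the feasible region for \eqref{In-ConditionsOnAandB} is unbounded, so one can choose $a \in (r, h_{\min})$ and $b > M$ inside it. Such an $(a,b)$ satisfies \eqref{In-ConditionsOnAandB} and traps the orbit eventually.

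Next, I would identify the fixed points of $G_{10} = G_1 \circ G_0$ with solutions of System \eqref{Eq-artificialCycles}. From the derivation in Subsection \ref{SubSection-2.3}, any fixed point $\zeta = (\bar x, \bar y, \bar u, \bar v)$ of $G_{10}$ satisfies $\bar u = \bar y f(\bar x) + h_1$ and $\bar v = \bar x f(\bar y) + h_0$, with $(\bar x, \bar y)$ solving System \eqref{Eq-artificialCycles}. The $2$-cycle relations \eqref{Eq-2Cycle} show that $(\bar z_0, \bar z_1)$ is one such solution; by the uniqueness hypothesis it is the only one, so $\zeta = (\bar z_0, \bar z_1, \bar z_0, \bar z_1)$ is the unique fixed point of $G_{10}$. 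Proposition \ref{Pr-GSConditions} then yields a global attractor, which is either an equilibrium or a $2$-cycle. Since $h_0 \neq h_1$, Proposition \ref{Pr-h0>h1} gives $\bar z_0 \neq \bar z_1$, so the fixed point has the form $(\bar x, \bar y, \bar x, \bar y)$ with $\bar x \neq \bar y$; by the classification in Subsection \ref{SubSection-2.3} this corresponds to the $2$-cycle $\{\bar z_0, \bar z_1\}$ as the global attractor.

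The main obstacle is coordinating two competing constraints on $(a,b)$: the condition \eqref{In-ConditionsOnAandB} and the eventual trapping of the orbit. The hypothesis $h_0, h_1 > r$ is used twice -- it furnishes the contraction factor $c < 1$ that controls the orbit from above, and simultaneously pushes the feasible region in Lemma \ref{Lem-ConditionsOn(a,b)} far enough that a valid $(a,b)$ meeting both requirements actually exists.
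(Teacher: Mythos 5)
Your proof is correct and follows the paper's strategy in all essentials: both arguments reduce the theorem to Proposition \ref{Pr-GSConditions} by trapping the orbit in a box $[a,b]^2$ satisfying \eqref{In-ConditionsOnAandB}, and both use the uniqueness of solutions of System \eqref{Eq-artificialCycles} to conclude that $G_{10}$ has the single fixed point $(\bar z_0,\bar z_1,\bar z_0,\bar z_1)$, whence the $2$-cycle is the global attractor. The one place you diverge is the construction of the box: the paper tailors $(a,b)$ to each initial condition (taking $b=g_1(a)$ with $a$ close to $r$ when $h_0>h_1$, or $a=g_1(b)$ with $b$ large when $h_0<h_1$, so that $(x_2,x_1)\in[a,b]^2$), whereas you derive a uniform ultimate bound $\limsup_n y_n\le h_{\max}/(1-e^{r-h_{\min}})$ from the linear majorant $y_{n+1}<c\,y_n+h_{\max}$ and use a single absorbing box; this is a clean and self-contained alternative. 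The only point to tighten is your appeal to Lemma \ref{Lem-ConditionsOn(a,b)}: unboundedness of the feasible set does not by itself guarantee that you may prescribe $a\in(r,h_{\min})$ and $b>M$ simultaneously. What makes your choice legitimate is that $g_1(t)>h_0$ and $g_2(t)>h_1$ for all $t>r$, so every $a\in(r,h_{\min})$ satisfies $a<\min\{g_1(b),g_2(b)\}$ for every $b>r$, and then any $b>\max\{g_1(a),g_2(a),M\}$ closes both halves of \eqref{In-ConditionsOnAandB}; that one-line verification should be added.
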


\begin{proof}
For each initial condition $(x_0,x_{-1})$ of Eq. \eqref{Eq-Ricker4}, we need to find a point $(a,b)$ that makes Proposition \ref{Pr-GSConditions} applicable. Without loss of generality, we can consider the initial conditions to be $(x_2,x_1).$ In this case, we guarantee that $x_1>h_0>r$ and $x_2>h_1>r.$  If $h_0>h_1,$ we consider $(a,b)$ such that $a<b, b=g_1(a)$ and $a$ is sufficiently small so that $r<a<h_1.$ In this case and based on Lemma  \ref{Lem-ConditionsOn(a,b)}, we obtain
$$b\geq g_1(a)>g_2(a)>g_2(b)>h_1>a.$$
If $h_0<h_1,$ we consider $(a,b)$ such that $a<b, a=g_1(b)$ and $b$ is sufficiently large so that $g_2(h_0)<b.$ In this case and based on Lemma  \ref{Lem-ConditionsOn(a,b)}, we obtain
$$a\leq g_1(b)<g_2(b)<g_2(a)=g_2(g_1(b))<g_2(h_0)<b.$$
Observe that this choice of $(a,b)$ can be done to have $(x_2,x_1)\in [a,b]^2.$ All the hypotheses of Proposition \ref{Pr-GSConditions} are satisfied, and we invoke the Lemma to obtain the result.
\end{proof}
The uniqueness condition in Theorem \ref{Th-MainResult} is to guarantee that the global attractor is the $2$-cycle of $[F_0,F_1].$ However, if we require $h_0,h_1>r,$ then the theorem gives us either a globally attracting $2$-cycle or a trapping box in which its boundaries are determined by two artificial $2$-cycles of $[F_0,F_1].$

\section{Conclusion}
 We investigated two systems in this paper: the Ricker model with a time delay and constant stocking $x_{n+1}=F(x_n,x_{n-1})=x_n\exp(r-x_{n-1})+h$, and the system involving periodic stocking $x_{n+1}=F_n(x_n,x_{n-1})=x_n\exp(r-x_{n-1})+h_n.$  In both cases, we embedded the $2$-dimensional system into a $4$-dimensional monotonic system with respect to a partially ordered set. A unique positive equilibrium exists in the first system, which we denote by $\bar y_h.$ The equilibrium $\bar y_h$ transitions into a cycle in the second system, and the cycle inherits the system's period. We utilized the embedding technique to analyze global stability in both systems.
  \\

  In the constant stocking case, we obtained local stability when $\bar y_h<1+h$, which leads to the condition $0<r<r_1=h+1-\ln(h+1).$ On the other hand, we proved global stability (see Theorem \ref{Th-GlobalStabilit1}) under the constraint
$$\bar y_h<\frac{1}{2}\left(h+\sqrt{h^2+4h}\right),$$
which leads to the condition
$$0<r<r_2=\frac{1}{2}\left(h+\sqrt{h^2+4h}\right)+\ln\left(1-\frac{2h}{h+\sqrt{h^2+4h}}\right).$$
It's worth noting that the global stability condition we established is more stringent than the local stability condition, i.e., $r_2<r_1.$ Therefore, the common assertion that ``local stability implies global stability" is still an open problem for all values of $r$ between $r_1$ and $r_2$ as we illustrated in Fig. \ref{Fig-StabilityRegions1}.
\\

For the $2$-periodic system $x_{n+1}=F_{n\bmod 2}(x_n,x_{n-1}),$ we established conditions that ensure both local and global stability of the existing $2$-cycle (see Theorem \ref{Th-MainResult}). We illustrated a scenario in which the $2$-cycle undergoes a Neimark-Sacker bifurcation, leading to the emergence of two invariant curves that act as a unified attractor (see Example \ref{Ex-PeriodDoubling}). Finding the explicit form of the cycle is not possible, but we provided illustrative numerical examples that clarify various possible scenarios (see Example \ref{Ex-LastExample}).
As in the constant stocking case, sufficient conditions for the global stability of the $2$-cycle are obtained in the $2$-periodic case. Nonetheless, identifying necessary and sufficient conditions remains an open problem. We also conjecture here that the local stability of the $2$-cycle is sufficient to ensure its global stability.
\\
%-----------------------------------

\noindent{\textbf{Acknowledgment:}} The authors express their gratitude to the anonymous reviewers for their valuable comments. This work does not have any conflicts of interest. There are no funders to report for this submission. 
%\bibliographystyle{unsrt}
%\bibliography{AlSharawi-bibliography}

\end{document}